\newcommand{\half}{\mbox{$\frac12$}}
\newcommand{\essup}[1]{{\rm ess}\,{{\displaystyle \sup_{\hspace*{-5mm}{#1}}}}}
\newcommand{\real}{{\mathbb R}}
\def\bp{\begin{proof}}
\def\ep{\end{proof}}
\def\n{\nabla}
\def\sfrac#1#2{\mbox{\Large$\frac{#1}{#2}$}}
\def\intl#1{\int\limits_{#1}}
\def\intll#1#2{\int\limits_{#1}^{#2}}
\def\dm{|\hskip-0.05cm|}
\def\OO{\Omega}
\def\displ{\displaystyle}
\def\VS{\vspace{6pt}\\\displ }
\def\rf#1{{\rm(\ref{#1})}}
\def\R{\Bbb R}
\def\be{\begin{equation}}
\def\ba{\begin{array}}
\def\ea{\end{array}}
\def\ee{\end{equation}}
\def\ov{\overline}
\def\po{{\partial\Omega}}
\newtheorem{lemma}
{\bf Lemma} 
\font\sc=cmcsc10
\begin{document}
\newtheorem{ass}
{\bf Assumption} 
\newtheorem{defi}
{\bf Definition} 
\newtheorem{tho}
{\bf Theorem} 
\newtheorem{rem}
{\sc Remark} 
\newtheorem{coro}
{\bf Corollary} 
\newtheorem{prop}
{\bf Proposition}
\renewcommand{\theequation}{\arabic{section}.\arabic{equation}}
\title{On the Asymptotic Behavior in Time of the Kinetic Energy \\
in a Rigid Body--Liquid  Problem\\ } 
\medskip\bigskip
\author{Giovanni P. Galdi \& Paolo Maremonti}
\date{\em To Vsevolod Alekseevich Solonnikov, in memoriam}
\maketitle
\setcounter{section}{1}
\begin{abstract} We give sufficient conditions on the initial data for the decay in time of the kinetic energy, $E$, of solutions to the system of equations describing the motion of a rigid body in a Navier-Stokes liquid. More precisely, assuming the initial data  ``small" in appropriate norm, we show that if, in addition, the initial velocity field of the liquid, $v_0$, is  in $L^q$, $q\in(1,2)$, then $E(t)$ vanishes as $t\to\infty$ with a specific order of decay. The order remains, however, unspecified if $v_0\in L^2$.   
\end{abstract}
\section*{{Introduction}}
Let  $\mathscr B$ be a rigid body (closure of a simply connected bounded domain of $\real^3$) moving  in an otherwise quiescent  Navier-Stokes liquid $\mathscr L$ that fills the entire space outside $\mathscr B$.  The motion of the coupled system $\mathscr S:=\mathscr B\cup\mathscr L$ is generated  by prescribed initial data.    
\par 
One of the interesting questions that has recently attracted the attention of several mathematicians, is the behavior  of $\mathscr S$ for large times. More specifically, in \cite{ET} for $\mathscr B$ a sphere, and in \cite{GaLTB,Tu} in the general case, under diverse assumptions on the initial data and driving mechanism, it is shown  that as time grows indefinitely large, the velocity  $v$ of $\mathscr L$ as well as  translational ($\xi$) and angular ($\omega$) velocities of $\mathscr B$ will tend to 0 in certain norms; see also \cite{Hi}. As a matter of fact, in \cite{ET,Tu} it is also shown the non-trivial result that the center of mass $G$  of $\mathscr B$ (in absence of external forces and torques) will necessarily cover a finite distance.  
\par
Notwithstanding the above efforts, in the  general and more interesting case of a body of {\em arbitrary} shape, the following, physically significant question is left open. Let
$$
E(t):=\half\left\{\rho\int_\Omega|v(t)|^2+m|\xi(t)|^2+\omega(t)\cdot {\sf I}\cdot\omega(t)\right\}
$$
with $m$ and ${\sf I}$ mass and inertia tensor of $\mathscr B$, 
denote the total kinetic energy of $\mathscr S$. Is it true that, as $t\to\infty$, $E(t)$ will eventually vanish? The answer to this question might by no means be trivial since, even for the classical Navier-Stokes problem, it was (positively) answered \cite{Ma,Sc} only several decades after its original formulation due to Leray  \cite{Le}.  
In this respect, in \cite{GaLTB}, under the assumption that the initial data have ``small" kinetic energy and dissipation,  it is proved that, as $t\to\infty$, while $\xi(t)$ and $\omega(t)$ tend pointwise to 0,  the velocity field $v$ tends to 0 in the $L^q$-norm only for some $q>2$, thus excluding the case $q=2$, representative of the kinetic energy of $\mathscr L$. To be precise, denoting by $\|\cdot\|_q$ the $L^q$-norm, in \cite{GaLTB} it is proved $\lim_{t\to\infty}\|v(t)\|_6=0$. However, by elementary interpolation, for any $q\in (2,6)$, we have $\|v(t)\|_q\le \|v(t)\|_2\|v(t)\|_6$, which, since $\|v(t)\|_2$ is uniformly bounded \cite[Theorem 2.1]{GaLTB},  shows the claimed property. Moreover, in \cite{Tu}, under further assumptions on the initial data, it is also provided a rate of decay for $\xi$ and $\omega$ and, for $v$,  in $L^q$-norms, but, again, only for $q>2$.   
\par 
More recently, the above question has been addressed in \cite{GaK}, even under the more general case of external forces acting on $\mathscr B$. There, under the hypothesis that the initial data, $v_0,\omega_0$ in addition to having ``small" finite total kinetic energy and dissipation, are also such that $v_0$ is $L^2$-summable with an increasing logarithmic weight, it is shown that $E(t)$ does tend to 0, provided $G$ is kept fixed, namely, $\xi(t)\equiv 0$. In \cite{GaK} it is also noticed that the method there used does not work if $\xi(t)\not\equiv0$.   
\par   
In this article, we address the problem of the time decay of the kinetic energy, with a two-fold objective. Firstly, we remove the assumption $\xi(t)\equiv 0$ made in \cite{GaK}, and in addition, we  also provide a rate of decay for $E(t)$. Precisely, we prove the latter if the initial data satisfy the conditions stated in \cite{GaLTB} with the further hypothesis that $v_0$
 is in $L^q$, for some $q\in [1,2)$; see Theorem \ref{LDD}. If, however, $v_0$ is in $L^2$, then we can only say that $E(t)\to 0$, with no specific order of decay; see Theorem \ref{ABLTN}. Our findings  are sharp, in the sense that in the case $\xi\equiv\omega\equiv 0$, they furnish exactly the same ones for classical Navier-Stokes problem, which are known to be  optimal. Concerning the method we use, it consists of a suitable combination of the methods employed  in \cite{Ma} and \cite{GaLTB}.   
\par
The plan of the article is as follows. In the next section, we recall the relevant equations along with some side results from \cite{GaLTB}.  Moreover,  we prove a key result regarding a time-growth estimate of the $L^q$-norm of $v$; see Proposition \ref{LQ_1}. Finally, in Section 3, we give a proof of our main result, presented in Theorems \ref{LDD} and \ref{ABLTN}.  
\section{Governing Equations and Preliminary Results} 
As somehow customary in this type of study, we shall describe te motion of $\mathscr B$ from a body-fixed frame. In this way,  the domain, $\Omega$, occupied by the liquid becomes time-independent. We suppose $\Omega$ of class $C^2$. Thus, also taking into account the notation introduced in the previous section, the relevant equations describing the motion of the coupled system body-liquid in absence of external forces read as follows \cite{GaRev}
\be\label{NS}\ba{l}\left.\ba{l}\medskip v_t=\n\cdot \mathbb T(v,\pi)-[(v-V)\cdot\n v+\omega\times v]\\
\n\cdot v=0
\ea\right\}
\mbox{\, in } (0,\infty)\times\OO\,,\VS v=V\mbox{\, on }(0,\infty)\times \po\,,\;\lim_{|x|\to\infty}v(t,x)=0\,,t\in(0,T)\,,\\\;m\hskip0.025cm\dot\xi+m\hskip0.025cm\omega\times\xi+\!\!\displ\intl{\po}\hskip-0.1cm\mathbb T(v,\pi)\hskip-0.05cm\cdot\hskip-0.05cm n=0\,,\\ {\sf I}\cdot\dot\omega+\omega\times{\sf I}\hskip-0.05cm\cdot\hskip-0.05cm\dot\omega+\!\!\displ\intl{\po}\hskip-0.1cmx\times \mathbb T(v,\pi)\hskip-0.05cm\cdot\hskip-0.05cm n=0\,,\,t\!\in\!(0,T)\,,\VS
 v(0,x)=v_0(x)\,\mbox{ on }\{0\}\times\OO\,,\; \xi(0)=\xi_0\,,\;\omega(0)=\omega_0\,.\ea\ee
In these equations, $\pi$ is the pressure field of ${\cal L}$,  and $V(x,t):= \xi(t)+ \omega (t) \times x$.  Also, $\mathbb T$ is the Cauchy stress tensor given by
$$
\mathbb T(v,\pi)=2\mathbb D(u)-\pi\,\mathbb I,\ \ 2\mathbb D(v):=\nabla v+(\nabla v)^\top\,,
$$
with  $\mathbb I$ identity tensor. Since our final result is independent of their actual values we set, for simplicity, both density and shear viscosity of $\mathscr L$  equal to 1. 
Moreover,  $m$  denotes the mass of  $\mathscr B$ and ${\sf I}$ its inertia tensor relative to $G$.
\par
We now begin to recall several preparatory results proved in \cite{GaLTB} and some of their corollaries.
Let~\footnote{We shall use standard notation for function
spaces, see \cite{ad}. So, for instance, 
$L^q(\mathcal A)$, $W^{m,q}(\mathcal A)$, $W_0^{m,q}(\mathcal A)$,
etc., will denote the usual Lebesgue and Sobolev spaces on the
domain $\mathcal A$, with norms
$\|\,\cdot\|_{q,\mathcal A}$ and $\|\,\cdot\|_{m,q,\mathcal A}$, respectively.
Whenever confusion will not arise, we shall omit the subscript $\mathcal A$.
Occasionally, for $X$ a Banach space, we denote by $\|\cdot\|_X$ its associated norm. Moreover $L^q(I;X)$, $C(I;X)$ $I$  real interval,  denote classical Bochner spaces.}
$$
{\mathcal R}:=\{\overline{v}\in C^{\infty}(\mathbb R^3): \overline{v}(x)= \overline{v}_1+\overline{v}_2 \times x, \,\ \overline{v}_i \in \mathbb R^3\,, \ i=1,2\}\,,
$$
and define
$$
{\cal V}(\Omega) =   \{ v \in W^{1,2}(\Omega): \nabla\cdot v =0 \textrm{ in }\Omega, \textrm{ }\textrm{ } v\left|_{\Sigma}\right.= \overline{v}, \ \mbox{for some} \  \overline{v}   \in {\cal R}\}.
$$
With the origin in the interior of $\mathscr B$,  shall use the notation
$$
B_R:\{x\in\real^3:|x|<R\}\,,\ \Omega_R:=\Omega\cap B_R,,\ \Omega^R:=\Omega\backslash\overline{B_R}\,,\ R>R_*={\rm diam}\,\mathscr B\,.
$$
The following lemma ensures global existence of solutions to (\ref{NS}) in a suitable function class, and is a particular case of \cite[Theorem 2.1]{GaLTB}.
\begin{lemma}\label{GESD}{\sl There is a $\delta>0$ such that for all $v_0\in \mathcal V(\OO)$, with $v_0=V_0:=\xi_0+\omega_0\times x$ on $\po$, satisfying 
\be\label{GESD-I}\dm v_0\dm_{1,2}+|\xi_0|+|\omega_0|\leq \delta\,,
\ee 
there exists  a corresponding solution $(v,\pi,\xi(t),\omega(t))$  to \rf{NS} defined on $(0,\infty)\times\OO $ such that
\be\label{GESD-II}\ba{l} v\in L^\infty(0,\infty;W^{1,2}(\OO))\mbox{ with }\n v\in L^2(0,\infty;W^{1,2}(\OO))\,, \;\n\pi\in L^2(0,\infty;L^2(\OO))\,, \VS v\in C([0,T);W^{1,2}(\OO_R))\,,\;v_t\,,\;\pi\in L^2(0,\infty;L^2(\OO_R))\,\;\mbox{ for all } T>0\ \mbox{and}\ R\geq R_*\,,\VS\xi,\,\omega\in W^{1,2}(0,\infty)\cap C([0,\infty))\,.\ea\ee
In particular, the following estimate hold 
\be\label{goes}
\essup{t\ge 0}\left(\|v(t)\|_{1,2}+|\xi(t)|+|\omega(t)|\right)+\int_0^\infty\left(|\xi(t)|^2+|{\omega}(t)|^2+|\dot\xi(t)|^2+|\dot{\omega}(t)|^2+\|\n v(t)\|^2_{1,2}\right){\rm d} t\le \kappa\,\delta\,,
\ee
where $\kappa=\kappa(\Omega)>0$. 
}\end{lemma}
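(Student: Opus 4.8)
The plan is to establish the lemma by the by-now standard route for small-data global existence: a Galerkin approximation supported by two a priori estimates (energy and enstrophy), closed globally by the smallness assumption and a continuation argument. First I would recast the coupled body--liquid system as a single evolution problem in the space $\mathcal V(\OO)$, eliminating the pressure through the Helmholtz decomposition and encoding the body velocities $\xi,\omega$ in the boundary trace $v|_{\po}=\xi+\omega\times x$. In this monolithic formulation the two body equations for $\dot\xi$ and $\dot\omega$ are not separate constraints but arise automatically as the components of the weak formulation tested against the rigid fields $\mathcal R$. I would then construct finite-dimensional approximations $v^{(n)}$ from a basis of $\mathcal V(\OO)$ and solve the resulting ODE system locally in time.

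The first a priori bound is the energy identity. Testing the momentum equation with $v$ and using that $\n\cdot(v-V)=0$, the skew-symmetry of the cross products, and the reactive boundary terms to absorb the contributions from $\dot\xi$ and $\dot\omega$, one obtains $\tfrac{d}{dt}E(t)+2\|\mathbb D(v(t))\|_2^2=0$. By a Korn-type inequality adapted to $\mathcal V(\OO)$ this delivers uniform control of $\|v(t)\|_2$, $|\xi(t)|$, $|\omega(t)|$ together with $\int_0^\infty\|\n v\|_2^2$ in terms of the data, which is the lowest-order part of \rf{goes}.

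The crux of the argument, and the step I expect to be the main obstacle, is the higher-order (enstrophy) estimate producing the uniform $W^{1,2}$ bound and the summability $\n v\in L^2(0,\infty;W^{1,2}(\OO))$. Here I would differentiate in time and test with $v_t$, or equivalently test the momentum equation with the Stokes operator $-P\Delta v$. The genuine difficulty is that $V=\xi+\omega\times x$ grows linearly at infinity, so the drift $(\omega\times x)\cdot\n v$ on the exterior domain $\OO$ is not controlled by the dissipation on its own. The resolution rests on the classical structural fact that the operator $v\mapsto(\omega\times x)\cdot\n v-\omega\times v$, being the infinitesimal generator of the one-parameter group of rotations, is skew-symmetric in $L^2(\OO)$; this cancellation neutralizes the linearly growing part of the drift and leaves only lower-order terms controllable by the dissipation and the body data. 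The genuinely nonlinear inertial term $(v\cdot\n v,\cdot)$ is then estimated through Sobolev embedding and Young's inequality, the required small factor being furnished by the data.

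Combining the two estimates yields a differential inequality of logistic type for $Y(t):=\|\n v(t)\|_{1,2}^2+|\xi(t)|^2+|\omega(t)|^2$, schematically $\tfrac{d}{dt}Y+c\,Y\le C\,Y^{3/2}$, whose initial value is dominated by $\delta$. Choosing $\delta$ small enough keeps the quadratic-type term subdominant, so that $Y(t)$ remains small for all $t\ge0$; this simultaneously closes the a priori estimate globally — upgrading the local solution to a global one by the continuation argument — and produces the integrated bound in \rf{goes}. Finally I would pass to the limit in the Galerkin scheme using the uniform bounds together with Aubin--Lions compactness on the bounded subdomains $\OO_R$, and recover the pressure $\pi$ and $\n\pi$ by the usual de Rham argument, thereby obtaining the full regularity class \rf{GESD-II}.
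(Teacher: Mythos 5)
First, a point of context: the paper does not prove this lemma at all --- it is imported verbatim as a particular case of \cite[Theorem 2.1]{GaLTB}. The closest the present paper comes to exhibiting the argument is the proof of Lemma \ref{ExNSM}, which the authors describe as a simpler instance of the same scheme: lifting of the boundary velocity by a compactly supported solenoidal field $\widehat V$, invading domains $\Omega_{R_k}$ with a Galerkin basis of Stokes eigenfunctions, the two energy relations obtained by testing with $u$ and with ${\sf P}\Delta u$, Heywood's inequality to convert $\|{\sf P}\Delta u\|_2$ into $\|D^2u\|_2$, and closure by smallness. Your outline matches this architecture in its main lines (monolithic weak formulation over $\mathcal V(\Omega)$, energy plus enstrophy estimates, the cancellation coming from the rotational drift $(\omega\times x)\cdot\n v-\omega\times v$, passage to the limit by compactness), so the overall strategy is the right one.

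There is, however, one genuine gap in the way you close the estimates. You claim a differential inequality of the form $Y'+c\,Y\le C\,Y^{3/2}$ with $Y=\|\n v\|_{1,2}^2+|\xi|^2+|\omega|^2$, and you rely on the damping term $c\,Y$ to keep $Y$ small. No such damping is available: $\Omega$ is an exterior domain, so there is no Poincar\'e-type inequality bounding $\|\n u\|_2^2$ by the dissipation $\|D^2u\|_2^2$ produced by the enstrophy estimate, and the left-hand side of \rf{MC} correspondingly carries only $\frac{d}{dt}\|\n u\|_2^2+c_1\|D^2u\|_2^2$, with $\|\n u\|_2^2$ appearing on the \emph{right}. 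The closure that actually works is different: one uses the fact that $\int_0^\infty\|\n u\|_2^2\,dt$ is already small by the first (energy) estimate, and then a Gronwall-type lemma with small integrable coefficient (\cite[Lemma 4.3]{GaLTB}) to conclude that $\esssup_t\|\n u(t)\|_2$ stays of order $\delta$; the logistic mechanism you describe would not run. A secondary omission: your sketch accounts for $\esssup\|v\|_{1,2}$ and $\int\|\n v\|_{1,2}^2$, but not for the remaining pieces of \rf{goes}, namely $\int_0^\infty(|\xi|^2+|\omega|^2)\,dt$ (which requires a rigid-trace inequality of the type $|\xi|+|\omega|\le c\,\|\n v\|_2$ for fields in $\mathcal V(\Omega)$ vanishing at infinity) and $\int_0^\infty(|\dot\xi|^2+|\dot\omega|^2)\,dt$ (which comes from the body ODEs in \rf{NS} together with an $L^2$-in-time bound on the surface traction $\mathbb T(v,\pi)\cdot n$). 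These are used repeatedly later in the paper, so they cannot be waved away.
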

The next lemma is shown in \cite[Lemma 3.2]{GaLTB}
\begin{lemma}\label{LGPG}{\sl Let $(v,\pi,\xi,\omega)$ be the solution given in Lemma\,\ref{GESD}. Then, for all $q_1\in(1,6]$ and $q_2\in(\frac32,\infty]$ we have \be\label{LGPG-I}\n\pi\in L^{q_1}(\Omega^{2R_*})\,,\;\ \pi\in L^{q_2}(\OO^{2R_*})\,,\mbox{ a.e. in }t>0\,.\ee In particular, 
\be\label{LGPG-II}\dm \n\pi\dm_{q_1}\leq c(\dm v\cdot \n v\dm_{q_1}+\dm \n\pi\dm_2)\,,\mbox{ a.e. in }t>0\,.\ee}\end{lemma}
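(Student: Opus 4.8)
\bp
I would reduce the assertion to a Poisson problem on the whole space, where Calder\'on--Zygmund theory is available. Taking the divergence of the momentum equation in \rf{NS}, writing $\n\cdot\mathbb T(v,\pi)=\Delta v-\n\pi$ and using $\n\cdot v=0$ (so that $\n\cdot v_t=0$ and $\n\cdot\Delta v=0$), one finds
\be\label{pPois}
\Delta\pi=-\n\cdot\big[(v-V)\cdot\n v+\omega\times v\big]\,.
\ee
The decisive feature is that the rigid--motion terms drop out: since $V=\xi+\omega\times x$ with $\xi,\omega$ independent of $x$, a short computation using $\n\cdot v=0$ gives $\n\cdot(V\cdot\n v)=\n\cdot(\omega\times v)=-\omega\cdot(\n\times v)$, and, entering \rf{pPois} with opposite signs, these cancel. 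Hence, in the liquid,
\be\label{pPois2}
\Delta\pi=-(\partial_iv_j)(\partial_jv_i)=-\partial_i\partial_j(v_iv_j)\,,
\ee
which is precisely the pressure equation of the classical exterior Navier--Stokes problem; in particular, the potentially dangerous linear growth carried by $\omega\times x$ has disappeared, and the flat--space machinery applies unchanged.

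Next I would localize to the exterior of $\mathscr B$. Fix $\chi\in C^\infty(\real^3)$ with $\chi\equiv1$ for $|x|\ge 2R_*$ and $\chi\equiv0$ for $|x|\le\frac32R_*$, and set $\widetilde\pi:=\chi\pi$, extended by zero inside $\mathscr B$. Then $\widetilde\pi$ solves on all of $\real^3$
\be\label{cutP}
\Delta\widetilde\pi=\chi\,\Delta\pi+2\n\chi\cdot\n\pi+\pi\,\Delta\chi\,.
\ee
Using $\partial_j(v_iv_j)=(v\cdot\n v)_i$, I would put the leading term into divergence form, $\chi\,\Delta\pi=-\partial_i[\chi(v\cdot\n v)_i]+(\partial_i\chi)(v\cdot\n v)_i$, so that the right--hand side of \rf{cutP} becomes $-\partial_i[\chi(v\cdot\n v)_i]+G$, with $G:=(\partial_i\chi)(v\cdot\n v)_i+2\n\chi\cdot\n\pi+\pi\,\Delta\chi$ supported in the bounded annulus $A:=\{\frac32R_*\le|x|\le2R_*\}$. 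Representing $\widetilde\pi$ through the Newtonian potential of this source and differentiating, the divergence term contributes to $\n\widetilde\pi$ a Calder\'on--Zygmund singular integral of $\chi\,v\cdot\n v$, bounded on $L^{q_1}(\real^3)$ for every $q_1\in(1,\infty)$, hence dominated by $c\,\|v\cdot\n v\|_{q_1}$. The terms from $G$ are supported in the compact set $A$ and involve there only $\pi$ and $\n\pi$; normalizing $\pi$ to have zero mean over $A$ and invoking Poincar\'e together with the global bound $\n\pi\in L^2(\OO)$ from \rf{GESD-II}, they are all controlled by $\|\n\pi\|_2$. Since $\chi\equiv1$ and $\n\widetilde\pi=\n\pi$ on $\{|x|\ge2R_*\}$, this yields $\n\pi\in L^{q_1}(\OO^{2R_*})$ along with \rf{LGPG-II}; the statement $\pi\in L^{q_2}(\OO^{2R_*})$ follows by the same scheme one derivative lower, representing $\pi$ (normalized to vanish at infinity) through the potential of $v\otimes v$. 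That the quadratic data sit in the required Lebesgue classes is read off from \rf{GESD-II}: $v\in W^{1,2}(\OO)$ with $\n v\in W^{1,2}(\OO)$ forces $v\in W^{2,2}(\OO)\hookrightarrow L^\infty(\OO)$ and $\n v\in L^2(\OO)\cap L^6(\OO)$, so that $v\cdot\n v$ and $v\otimes v$ belong to the spaces dictated by the asserted ranges of $q_1,q_2$.

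The part I expect to be the real work is the bookkeeping at the two ends of the exponent intervals. Securing the endpoint $q_2=\infty$, i.e. the boundedness of $\pi$, is delicate because the plain singular--integral bound is no longer available and one must instead exploit the decay of the kernel away from the compact support of the effective source; likewise, pushing $q_1$ and $q_2$ below their natural thresholds ($\frac32$ for $v\cdot\n v$, $3$ for $v\otimes v$) down to the stated ranges requires using the decay of the fields at infinity. By contrast, the cancellation leading to \rf{pPois2} and the Calder\'on--Zygmund step are routine.
\ep
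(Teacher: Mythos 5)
First, a point of comparison: the paper does not prove this lemma at all --- it is imported verbatim from \cite{GaLTB} (``The next lemma is shown in [GaLTB, Lemma 3.2]''), so there is no in-text argument to measure yours against; I can only assess the proposal on its own terms. Your structural reductions are correct and are the standard route to such pressure estimates: the identity $\n\cdot(V\cdot\n v)=\n\cdot(\omega\times v)=-\omega\cdot(\n\times v)$ checks out, so the rotating-frame terms do cancel and $\pi$ satisfies the classical equation $\Delta\pi=-\partial_i\partial_j(v_iv_j)$; the localization and the Calder\'on--Zygmund treatment of the divergence-form part of the source correctly produce the $\dm v\cdot\n v\dm_{q_1}$ contribution for every $q_1\in(1,\infty)$; and the membership of $v\cdot\n v$ and $v\otimes v$ in the needed Lebesgue classes does follow from \rf{GESD-II}.

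The gap is exactly where you park it, and it is not mere bookkeeping. The compactly supported remainder $G$ contributes to $\n\widetilde\pi$ a term (gradient of the Newtonian potential of $G$) which, for a generic compactly supported integrable $G$, decays only like $|x|^{-2}$ and hence lies in $L^{q_1}$ near infinity only for $q_1>\frac32$; likewise the potential of $G$ itself decays like $|x|^{-1}$, giving $\pi\in L^{q_2}$ only for $q_2>3$. Your Poincar\'e/normalization step controls the \emph{size} of $G$ in terms of $\dm\n\pi\dm_2$, but says nothing about this decay, so the stated ranges $q_1\in(1,\frac32]$ and $q_2\in(\frac32,3]$ are asserted, not proved. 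Closing them requires showing that the monopole term is absent, i.e.\ $\int_{\real^3}G=0$; this would follow from $\int\Delta\widetilde\pi=0$ together with the vanishing of the integral of the divergence-form part, but both identities need decay of $\n\pi$ at infinity better than what $\n\pi\in L^2(\OO)$ alone supplies, so a bootstrap (or an explicit computation of that coefficient) is needed and is missing --- ``using the decay of the fields at infinity'' is the statement of the problem, not its solution. Two smaller omissions: the representation of $\widetilde\pi$ as a Newtonian potential presupposes that the harmonic difference is an entire harmonic function vanishing at infinity, which again requires the as-yet-unestablished decay of $\pi$; and the normalization of $\pi$ by zero mean on the annulus must be reconciled with the normalization $\pi\to0$ at infinity implicit in $\pi\in L^{q_2}$, since the two differ by a constant that has to be estimated. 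Finally, the endpoint $q_2=\infty$ does not require any delicate kernel analysis: once $\n\pi\in L^{6}(\Omega^{2R_*})$ and $\pi\in L^{q_2}(\Omega^{2R_*})$ for one finite $q_2$ are in hand, $\pi\in L^\infty(\Omega^{2R_*})$ follows from the Sobolev (Morrey) embedding.
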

\par
{\em Throughout this paper we denote by $C$ a positive constant depending only on $\Omega$ and the norms of the initial data entering equation \rf{GESD-I}}.
\par
The following result is given in \cite[Proposition 2.1]{GaK} in the case $\xi\equiv0$. However, the proof remains basically the same also if $\xi\not\equiv 0$ and, therefore, it will be omitted.
\begin{lemma}\label{GPSMD}{Let $(v,\pi,\xi,\omega)$ be the solution given in Lemma\,\ref{GESD}.
Then,   \be\label{GPSMD-I}\intll0\infty\dm \pi(t)\dm_2^2{\rm d}\,t\leq C\,.\ee}
\end{lemma}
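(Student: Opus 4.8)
The plan is to estimate the global-in-time $L^2$-norm of the pressure $\pi(t)$ over the liquid domain $\Omega$, building on the local estimates already available. The key observation is that the pressure naturally splits into two regions with different behaviors: near the body, where the no-slip-type boundary data $V=\xi+\omega\times x$ and the nonlinear convective terms dominate, and far from the body, where the flow is essentially that of a Stokes/Navier-Stokes problem in an exterior domain with decay at infinity. First I would fix some $R\ge 2R_*$ and write $\dm\pi(t)\dm_2^2=\dm\pi(t)\dm_{2,\Omega_R}^2+\dm\pi(t)\dm_{2,\Omega^R}^2$, so that the two contributions can be treated by the two distinct tools at our disposal.

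For the bounded part $\Omega_R$, the local regularity already recorded in Lemma \ref{GESD} gives $\pi\in L^2(0,\infty;L^2(\Omega_R))$ through \rf{GESD-II}, but to obtain a \emph{quantitative} bound by $C$ one normalizes the pressure (say, by its mean value over $\Omega_R$) and applies the standard pressure estimate for the Stokes system: testing the momentum equation against a solenoidal corrector or invoking the bound $\dm\pi\dm_{2,\Omega_R}\le c(\dm v_t\dm_{2,\Omega_R}+\dm\n v\dm_{1,2,\Omega_R}+\dm v\cdot\n v\dm_{2,\Omega_R}+|\dot\xi|+|\dot\omega|+|\omega|\,\dm v\dm_2)$, then integrating in time. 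Every term on the right is controlled, after time integration, by the global energy-dissipation bound \rf{goes}: the quadratic dissipation $\int_0^\infty\dm\n v\dm_{1,2}^2$, the integrable quantities $\int_0^\infty(|\dot\xi|^2+|\dot\omega|^2+|\omega|^2)$, and the nonlinear term $\int_0^\infty\dm v\cdot\n v\dm_{2,\Omega_R}^2$, which is finite because $v$ and $\n v$ lie in the appropriate spaces and $\Omega_R$ is bounded (so one may use the uniform bound on $\dm v\dm_{1,2}$ together with Sobolev embedding in three dimensions).

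For the exterior part $\Omega^R$, I would exploit the far-field decay furnished by Lemma \ref{LGPG}: there \rf{LGPG-I} gives $\pi\in L^{q_2}(\Omega^{2R_*})$ for every $q_2\in(\tfrac32,\infty]$, and \rf{LGPG-II} bounds $\dm\n\pi\dm_{q_1}$ by $\dm v\cdot\n v\dm_{q_1}+\dm\n\pi\dm_2$. Choosing $q_1,q_2$ suitably and combining a Poincar\'e–Sobolev inequality on the exterior region with the decay $\pi\to 0$ at infinity, one estimates $\dm\pi(t)\dm_{2,\Omega^R}$ by $\dm\n\pi(t)\dm_{q_1,\Omega^R}$ for an appropriate $q_1<2$, and hence by the convective nonlinearity. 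The time integral $\int_0^\infty\dm v\cdot\n v\dm_{q_1}^2\,\mathrm dt$ is then controlled via H\"older and the interpolation $\dm v\dm_{r}\le c\dm v\dm_{1,2}$, exploiting the uniform bound on $\dm v\dm_{1,2}$ and the integrability of $\dm\n v\dm_{1,2}^2$ coming from \rf{goes}.

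The main obstacle will be the coupling between the two regions and the handling of the pressure's lack of a global $L^2$-bound for its gradient all the way to infinity in three dimensions: the naive estimate $\dm\pi\dm_2\lesssim\dm\n\pi\dm_2$ fails in an exterior domain, so one must use the correct exponent $q_1<2$ in Lemma \ref{LGPG} and the exterior Sobolev inequality, being careful about the decay constant and the matching across $\partial B_R$. A cut-off function localizing to $\Omega^R$ introduces commutator terms involving $\n\chi\cdot\n\pi$ and $\pi\,\n\chi$ supported in the annulus $B_{2R}\setminus B_R$, which must themselves be absorbed into the bounded-region estimate; verifying that these cross terms are time-integrable is the delicate point, but it follows once the $L^2(0,\infty;L^2(\Omega_R))$ membership from \rf{GESD-II} is combined with \rf{LGPG-I}. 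Assembling the two bounds and integrating in time then yields \rf{GPSMD-I} with a constant $C$ depending only on $\Omega$ and the initial-data norms in \rf{GESD-I}, as asserted.
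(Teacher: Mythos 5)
The paper contains no proof of this lemma to compare against: it states that the result is \cite[Proposition 2.1]{GaK} for $\xi\equiv0$ and that the argument carries over unchanged when $\xi\not\equiv0$, and the proof is omitted. So your proposal can only be judged on its own terms. On those terms the architecture is reasonable and uses exactly the tools the paper supplies: split $\dm\pi\dm_2^2$ into a piece on $\Omega_R$ and a piece on $\Omega^R$; for the far field, use the Sobolev inequality for functions vanishing at infinity with the exponent $q_1=6/5$ (so that $\dm\pi\dm_{2,\Omega^R}\lesssim\dm\n\pi\dm_{6/5,\Omega^R}$), then \rf{LGPG-II} reduces this to $\dm v\cdot\n v\dm_{6/5}\le\dm v\dm_3\dm\n v\dm_2$ and $\dm\n\pi\dm_2$, both square-integrable in time by \rf{goes} and \rf{GESD-II}. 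That half of your sketch closes, provided you actually fix $q_1=6/5$ rather than leaving it as ``an appropriate $q_1<2$''.

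The genuine gap is the additive normalization of the pressure, which you flag (``the matching across $\partial B_R$'') but do not resolve. Your interior estimate controls only $\dm\pi-\bar\pi\dm_{2,\Omega_R}$ for a mean value $\bar\pi(t)$ over a bounded set, whereas the $\pi$ in \rf{GPSMD-I} is the one normalized by decay at infinity (it lies in $L^{q_2}(\Omega^{2R_*})$ for finite $q_2$ by \rf{LGPG-I}); these two normalizations differ by a time-dependent constant that must itself be bounded. The standard repair is a telescoping estimate on dyadic annulus means, $|\bar\pi_{r}-\bar\pi_{2r}|\le c\,r^{1-3/q_1}\dm\n\pi\dm_{q_1}$ with $q_1=6/5$, summed over $r=2^kR\to\infty$ and closed again by \rf{LGPG-II}; without some such step the interior and exterior pieces do not assemble into a bound on $\dm\pi\dm_{2,\Omega}$. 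A secondary weakness is your claim that the cut-off commutator terms are handled by ``the $L^2(0,\infty;L^2(\Omega_R))$ membership from \rf{GESD-II}'': that membership is qualitative, and if it came with a bound by a constant $C$ depending only on the data, the interior half of the lemma would already be proved. The quantitative control of those annulus terms must come from your own local estimate together with the normalization argument above, not from \rf{GESD-II}.
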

We also have
\begin{lemma}\label{ASS}{Let $(v,\pi,\xi,\omega)$ be the solution given in Lemma\,\ref{GESD}. Then
\be\label{ASS-I}\displ\intll0\infty\dm v(t)\dm_\infty^2{\rm d}\,t+\displ \intll0\infty\|\mathbb T(v,\pi)\cdot n\dm_1^2{\rm d}t\leq C\,.\ee}
\end{lemma}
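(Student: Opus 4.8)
The plan is to estimate the two space--time integrals in \rf{ASS-I} separately, in each case reducing matters to the global bounds already recorded in \rf{goes} and \rf{GPSMD-I}. Note first that, by \rf{goes}, $\n v\in L^2(0,\infty;W^{1,2}(\OO))$, so $v(t)\in W^{2,2}(\OO)$ for a.a.\ $t$; the delicate point is that every spatial norm entering the argument must ultimately be dominated by a quantity whose square is integrable in time. In particular $\|v(t)\|_{2,\OO}$ must be \emph{avoided}, since its time-integrability is precisely what is at stake in the paper.

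For the first integral I would fix $R>R_*$ and a cut-off $\chi\in C^\infty(\real^3)$ with $\chi\equiv0$ on $B_R$ and $\chi\equiv1$ outside $B_{2R}$, and split $v=\chi v+(1-\chi)v$. Since $\chi v$ vanishes near \po\ and at infinity, its zero-extension lies in $W^{2,2}(\real^3)$, and the homogeneous Gagliardo--Nirenberg inequality in $\real^3$,
$$\|\chi v\|_{\infty}\le c\,\|\n(\chi v)\|_{2}^{1/2}\,\|\n^2(\chi v)\|_{2}^{1/2}\,,$$
applies; expanding the cut-off derivatives (supported in $\OO_{2R}$) and using Young's inequality bounds $\|\chi v\|_\infty^2$ by $\|\n v\|_{1,2,\OO}^2+\|v\|_{2,\OO_{2R}}^2$. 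On the bounded region $\OO_{2R}$ I would instead use $W^{2,2}(\OO_{2R})\hookrightarrow L^\infty(\OO_{2R})$, so that $\|(1-\chi)v\|_\infty^2\le C(\|v\|_{2,\OO_{2R}}^2+\|\n v\|_{1,2,\OO}^2)$. The observation making both pieces time-integrable is that $\|v\|_{2,\OO_{2R}}$ is itself controlled by integrable quantities: since $v-V=0$ on \po, the Poincar\'e inequality gives
$$\|v\|_{2,\OO_{2R}}\le \|v-V\|_{2,\OO_{2R}}+\|V\|_{2,\OO_{2R}}\le C\big(\|\n v\|_{2,\OO}+|\xi|+|\omega|\big)\,,$$
where I used that the entries of $\n V$ have modulus $\le c\,|\omega|$ and $\|V\|_{2,\OO_{2R}}\le C(|\xi|+|\omega|)$. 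Collecting the two pieces yields, for a.a.\ $t$, $\|v(t)\|_{\infty,\OO}^2\le C(\|\n v(t)\|_{1,2,\OO}^2+|\xi(t)|^2+|\omega(t)|^2)$, and integrating in $t$ and invoking \rf{goes} gives the first bound in \rf{ASS-I}.

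For the second integral I would argue by the trace theorem on the compact boundary \po. Writing $\mathbb T(v,\pi)\cdot n=2\mathbb D(v)\cdot n-\pi\,n$ and using $\|\cdot\|_{1,\po}\le c\,\|\cdot\|_{2,\po}$ together with the trace inequality $\|f\|_{2,\po}\le c\,\|f\|_{1,2,\OO_{2R_*}}$ applied to $f=\n v$ and $f=\pi$, one obtains
$$\|\mathbb T(v,\pi)\cdot n\|_{1,\po}\le C\big(\|\n v\|_{1,2,\OO_{2R_*}}+\|\pi\|_{1,2,\OO_{2R_*}}\big)\,.$$
Squaring and integrating in time, the first term is controlled by $\int_0^\infty\|\n v\|_{1,2,\OO}^2\,{\rm d}t\le C$ from \rf{goes}, while the second is controlled by $\int_0^\infty(\|\pi\|_{2,\OO}^2+\|\n\pi\|_{2,\OO}^2)\,{\rm d}t\le C$, the bound on $\|\pi\|_{2,\OO}$ coming from \rf{GPSMD-I} and that on $\|\n\pi\|_{2,\OO}$ from Lemma \ref{GESD}. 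This delivers the second bound in \rf{ASS-I}.

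I expect the first estimate to be the only genuine obstacle: a direct use of $W^{2,2}(\OO)\hookrightarrow L^\infty(\OO)$ fails because it reintroduces $\|v\|_{2,\OO}$, which is not known to be square-integrable in time. The resolution is the interplay between the \emph{homogeneous} interpolation inequality in the exterior region---whose right-hand side is $\tfrac12(\|\n v\|_2^2+\|\n^2 v\|_2^2)$ after Young---and the Poincar\'e inequality anchored at the rigid boundary, which trades the forbidden $\|v\|_{2,\OO_{2R}}$ for the time-integrable triple $\|\n v\|_2,\,|\xi|,\,|\omega|$. Once this is in place, the traction estimate is comparatively routine, resting only on the local-in-space, $L^2$-in-time control of $\n v$ and $\pi$ near \po.
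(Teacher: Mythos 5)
Your argument is correct, and your treatment of the traction term coincides with the paper's: both routes pass through $\|\mathbb T(v,\pi)\cdot n\|_{1,\po}\le c\,(\|\n v\|_{1,2}+\|\pi\|_{1,2})$ near the boundary and then invoke \rf{goes} for $\n v$ and Lemma \ref{GPSMD} together with \rf{GESD-II} for $\pi$. Where you diverge is the $L^\infty$ bound. The paper disposes of it in one line via the embedding $\|v\|_\infty\le c\,(\|v\|_6+\|D^2v\|_2)$: since $v(t)\in W^{1,2}(\OO)\subset L^6(\OO)$ in the exterior domain, the homogeneous Sobolev inequality $\|v\|_6\le c\,\|\n v\|_2$ applies (this is precisely the kind of interpolation with non-homogeneous boundary values that the paper later cites from \cite{CrMa}), whence $\|v\|_\infty^2\le c\,\|\n v\|_{1,2}^2$ and \rf{goes} finishes. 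Your route --- cut-off decomposition, homogeneous Gagliardo--Nirenberg outside $B_{2R}$, the embedding $W^{2,2}\hookrightarrow L^\infty$ inside, and a Friedrichs--Poincar\'e inequality anchored at $\po$ to trade $\|v\|_{2,\OO_{2R}}$ for $\|\n v\|_2+|\xi|+|\omega|$ --- is sound: the Poincar\'e step is legitimate because $v-V$ vanishes on $\po$, a boundary portion of positive capacity, and $\|V\|_{1,2,\OO_{2R}}\le c\,(|\xi|+|\omega|)$, so every term on the right is square-integrable in time by \rf{goes}. It has the merit of making explicit why no global $L^2$-in-space norm of $v$ is ever needed; but the obstacle you flag (that $\|v(t)\|_{2,\OO}$ is not known to be square-integrable in time) is bypassed far more cheaply by interpolating $\|v\|_\infty$ against $\|v\|_6$ rather than against $\|v\|_2$, which is what the paper does.
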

\bp The proof is a consequence of the following classical embedding and trace inequalities
$$\|v\|_\infty\le c\,(\|v\|_6+\|D^2v\|_2)\,;
\ \ 
\|\mathbb T(v,\pi)\cdot n\dm_1\le c\,(\|\nabla v\|_{1,2}+\|\pi\|_{1,2})\,,
$$
combined with Lemmas \ref{GESD} and \ref{GPSMD}.
\ep
With all the above results in hand, we are now able to prove the following estimate that is of the essence to the proof of our main result.
\begin{prop}\label{LQ_1}{\sl   Let $(v_0,\xi_0,\omega_0)\in(\mathcal V(\OO)\cap L^q(\OO))\times \R\times\R$,  $q\in(1,2)$, satisfy (\ref{GESD-I}). Then, the corresponding solution  given in Lemma\,\ref{GESD} obeys the following estimate
\be\label{LQ-I}\dm v(t)\dm_q\leq \dm v_0\dm_q+C\,t^{\frac{2-q}{2q}}\,,\mbox{ for all }t>0\,.\ee   }\end{prop}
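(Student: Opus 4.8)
The plan is to derive an $L^q$ energy identity by testing the momentum equation in \rf{NS} against $|v|^{q-2}v$ (suitably regularised as $(|v|^2+\eta)^{(q-2)/2}v$, $\eta\downarrow0$, since $q<2$ makes the weight singular) and integrating over $\OO$. The decisive structural point is that \emph{every} convective and frame term drops out of the interior: writing $(v\cdot\nabla v)\cdot|v|^{q-2}v=\tfrac1q\,v\cdot\nabla|v|^q$, and likewise for $\xi\cdot\nabla v$ and $(\omega\times x)\cdot\nabla v$, each integrates by parts with the volume term $-\tfrac1q\int_\OO|v|^q\,\nabla\!\cdot(\,\cdot\,)$ vanishing because $v$, $\xi$ and $\omega\times x$ are solenoidal in $x$, so that only a harmless boundary integral over $\po$ survives; and the Coriolis term vanishes pointwise, $(\omega\times v)\cdot|v|^{q-2}v=0$. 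Thus the nonlinearity and the entire rigid-drift mechanism, which are the usual sources of trouble, cost nothing in the interior.

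After these cancellations one is left, with $c_q>0$ a Korn-type constant, with
\[
\frac1q\frac{d}{dt}\|v\|_q^q+c_q\!\int_\OO|v|^{q-2}|\nabla v|^2
\le\Big|\int_\OO\pi\,\nabla\!\cdot(|v|^{q-2}v)\Big|
+C\big(|\xi|+|\omega|\big)^{q-1}\big(\|\mathbb T(v,\pi)\cdot n\|_{1}+|\xi|+|\omega|\big)\,,
\]
the last term collecting all boundary integrals, controlled because on $\po$ one has $v=V=\xi+\omega\times x$, a bounded field there. The dissipation on the left is nonnegative and may be discarded. The boundary term is already of the right size: since $|\xi|+|\omega|\in L^2(0,\infty)$ and $\|\mathbb T(v,\pi)\cdot n\|_{1}\in L^2(0,\infty)$ by \rf{goes} and Lemma \ref{ASS}, while $(|\xi|+|\omega|)^{q-1}\in L^{2/(q-1)}(0,\infty)$, the whole expression lies in $L^{2/q}(0,\infty)$, so its time integral is $\le C\,t^{1-q/2}$ by H\"older — exactly the homogeneity dictated by the claimed exponent, $\big(t^{(2-q)/(2q)}\big)^q=t^{1-q/2}$.

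Hence the crux is the interior pressure term. Since $\nabla\!\cdot(|v|^{q-2}v)=(q-2)|v|^{q-4}(v\cdot\nabla v)\!\cdot\! v$ is bounded by $C|v|^{q-2}|\nabla v|$, it is $\le C\int_\OO|\pi|\,|v|^{q-2}|\nabla v|$; part is absorbed into the dissipation by Young's inequality, and the remainder is estimated by H\"older against $\|\pi\|_2\in L^2(0,\infty)$ (Lemma \ref{GPSMD}), the far-field pressure bounds of Lemma \ref{LGPG}, and the uniform $L^2\cap L^6$ control of $v$ and $\nabla v$ from \rf{goes}. The powers of these $L^2$-in-time quantities again conspire to put the whole right-hand side in $L^{2/q}(0,\infty)$, so that integrating the differential inequality gives $\|v(t)\|_q^q\le\|v_0\|_q^q+C\,t^{1-q/2}$, whence \rf{LQ-I} follows by taking the $q$-th root and using $(a+b)^{1/q}\le a^{1/q}+b^{1/q}$.

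The main obstacle is precisely this interior pressure term together with the singular weight $|v|^{q-2}$ ($q<2$): the naive bound $\int_\OO|\pi|^2|v|^{q-2}\le\eta^{(q-2)/2}\|\pi\|_2^2$ blows up as $\eta\downarrow0$, so one cannot avoid exploiting the structure of the weight (large only where $v$ is small, where $\nabla v$ and the pressure estimates compensate) and must justify the passage $\eta\downarrow0$, distinguishing the bounded region $\OO_{2R_*}$ from the exterior $\OO^{2R_*}$, where Lemma \ref{LGPG} supplies the missing $L^q$-control of $\nabla\pi$. It is also here that the restriction $q\in(1,2)$ is pinned down from both sides: the weight degenerates at $q=2$, while for $q>1$ the weight $|v|^{q-2}\sim|x|^{2-q}$ grows at spatial infinity yet is integrated against the decay of $\pi$ and $\nabla v$ to a convergent integral — and it is through the resulting $L^{2/q}(0,\infty)$ integrability, inherited from the $L^2$-in-time estimates of the pressure, the surface traction and the rigid velocities, that the sharp order $(2-q)/(2q)$ is produced.
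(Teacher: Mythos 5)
Your overall architecture coincides with the paper's: regularize the weight as $(|v|^2+\eta)^{(q-2)/2}$, kill the convective and rigid-drift terms by solenoidality, and show that the surviving boundary contributions lie in $L^{2/q}(0,\infty)$ so that H\"older in time yields the order $t^{(2-q)/2}$ for $\|v(t)\|_q^q$; that bookkeeping is correct. But at the step you yourself single out as the crux --- the interior pressure term --- your route fails and you do not repair it. By integrating by parts you arrive (after Young against the dissipation) at $\int_\OO|\pi|^2(|v|^2+\eta)^{(q-2)/2}$, whose weight becomes singular on $\{v=0\}$ as $\eta\downarrow0$; you acknowledge the blow-up and then only assert that ``the structure of the weight'' and a splitting of $\OO$ into $\OO_{2R_*}$ and $\OO^{2R_*}$ will save the day. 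Nothing in the available toolbox (Lemmas \ref{LGPG}--\ref{ASS}, \rf{goes}) controls such a negatively weighted integral, and the zero set of $v$ need not be small. The paper avoids the difficulty entirely by \emph{not} integrating the pressure term by parts: it keeps $\int_\OO h_R\,\nabla\pi\cdot v\,(|v|^2+\eta)^{(q-2)/2}$, whose weight satisfies $|v|(|v|^2+\eta)^{(q-2)/2}\le|v|^{q-1}$ with a \emph{positive} exponent, bounds it by $\|\nabla\pi\|_{6/(7-q)}\|v\|_6^{q-1}$ plus an $O(\eta)$ remainder, and then invokes the $L^{q_1}$ estimate \rf{LGPG-II} for $\nabla\pi$ --- which is precisely why Lemma \ref{LGPG} is formulated for $\nabla\pi$ rather than for $\pi$. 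This is the missing idea, and without it the proof does not close.

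A second, lesser gap: you integrate over all of $\OO$ with no spatial cut-off, so the integration by parts of $\frac1q(\omega\times x)\cdot\nabla|v|^q$ and $\frac1q\,\xi\cdot\nabla|v|^q$ at infinity is unjustified; $\omega\times x$ grows linearly while $|v|^q\in L^1$ is only known a posteriori. The paper works with a radial cut-off $h_R$, uses $(\omega\times x)\cdot\nabla h_R=0$ to eliminate the growing term identically, and disposes of the $\xi\cdot\nabla h_R$ remainder via a case distinction $q\in(1,\frac43)$ versus $q\in[\frac43,2)$ with a bootstrap through $L^{4/3}$ --- a step your argument would also need in some form before letting the domain of integration exhaust $\OO$.
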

\bp Let  $h_R=h_R(|x|)\in[0,1]$ be a smooth, radial cut-off function with $h_R=1$ for $|x|<\frac R2$, $h_R=0$ for $|x|>2R$ and $R|\n h_R|+R^2|\n\n h_R|\leq c$.
We then multiply both sides of the equation \rf{NS}$_1$ by $h_Rv(|v|^2+\eta)^\frac{q-2}2$, where $\eta>0$. Integrating by parts on $(0,t)\times \OO$, we obtain 
\be\label{LQ-O}\ba{l}\displ\sfrac d{dt}\dm (|v(t)|^2+\eta)^\frac 12h_R\dm_q^q+\dm h_R\n v(|v|^2+\eta)^\frac{q-2}4\dm_2^2+(q-2)\dm (\n v\cdot v)h_R(|v|^2+\eta)^\frac{q-4}2\dm_2^2\VS\displ\hskip 4cm =-\intl\OO\n\pi\cdot vh_R(|v|^2+\eta)^{\frac{q-2}2}dx+\Sigma+J\,,\ea\ee
with  
$$\Sigma:=\intl{\po}n\cdot \n v\cdot v(|v|^2+\eta)^{q-2}\,,
$$
and 
$$J:=\frac1q\intl\OO\Delta h_R(|v|^2+\eta)^\frac q2+(v-V)\cdot \n h_R(|v|^2+\eta)^\frac q2dx\,.
$$
We now estimate the three terms on the right-hand side of (\ref{LQ-O}). 
By the H\"older inequality, we get 
$$\ba{ll}\displ\intl\OO h_R|\n\pi|| v|(|v|^2+\eta)^\frac{q-2}2dx\hskip-0.2cm&\displ\leq \intl\OO|\n \pi||v|^{q-1}dx+ c\eta\intl\OO\n\pi h_Rdx \\&\leq \dm \n \pi\dm_\frac 6{7-q}\dm v\dm_6^{q-1}+c\eta R^{\frac32}\dm \n \pi\dm_2,,\mbox{ a.e. in }t>0\,.\ea$$ 
Also, by virtue of \rf{LGPG-II} and  Sobolev embedding, we obtain
$$
\|\nabla \pi\dm_\frac 6{7-q}\dm v\dm_6^{q-1}
\leq c \dm\n\pi\dm_2\dm \n v\dm_2^{q-1}+\dm \n v\dm_2^q\dm v\dm_\frac6{4-q}\,.
$$
From \rf{GESD-II}$_1$ and, again, Sobolev embedding, we infer that $\dm v\dm_\frac6{4-q}\leq C$ for all $t>0$. Thus, recalling \rf{GESD-II}$_{2,3}$, from what we showed so far we may conclude
\be\label{LQ-II}\lim_{R\to\infty}\lim_{\eta\to0}\intll0t \big|\intl\OO h_R\n\pi\cdot v(|v|^2+\eta)^\frac{q-2}2dx\big|d\tau\leq Ct^{\frac{2-q}2}\,,\mbox{ for all }t>0.\ee
In order to estimate $\Sigma$, we begin to notice that Sobolev embedding and \rf{LGPG}$_{1,2}$ ensure that a.e. in $t>0$ $v\in C(\ov \OO)$. In particular, a.e. in $t>0$, we have $\displ\max_\po|v(t,x)|\leq\dm v(t)\dm_\infty\in L^2(0,\infty)$\,. So that, applying H\"older's inequality,  we obtain, uniformly in $\eta>0$,
\be\label{LQ-III}\intll0t\Sigma\, d\tau\leq c\dm v\dm_\infty^{q-1}\dm \n v\dm_{L^2(\po)}d\tau\leq c\intll0t\dm v\dm_\infty^{q-1}(\dm D^2v\dm_2+\dm\n v\dm_2)d\tau\leq Ct^{\frac{2-q}2}\,,\;t>0\,.\ee
Finally, we consider the term $J$. We observe that, since $h_R$ is a radial function, we have $(\omega\times x)\cdot \nabla h_R=0$   for all $x\in\OO$. Thus, $$J:=\frac1q\intl\OO\Delta h_R(|v|^2+\eta)^\frac q2+(v-\xi)\cdot \n h_R(|v|^2+\eta)^\frac q2dx\,.$$ By  the Lebesgue theorem of the dominate convergence,  we arrive at
\be
\lim_{\eta\to0}\intll0t|J|d\tau\leq cR^{-2}\intll0t\dm v\dm_{L^q(R<|x|<2R)}^qd\tau+cR^{-1}\intll0t\dm |v-\xi|^\frac1q|v|\dm_{L^q(R<|x|<2R)}^qd\tau:=J_1+J_2\,.
\ee 
Applying H\"older's inequality,  we get
\be
\lim_{R\to\infty}J_1\leq c\lim_{R\to\infty}R^{-2+3\frac{2-q}2}\intll0t\dm v\dm_{L^2(R<|x|<2R}^qd\tau=0\,.
\ee 
Concerning $J_2$, since  $|v-\xi|\leq |v|+|\xi|$, then, with obvious meaning of the symbols, we set  
$$J_2\leq J_{21}+J_{22}\,.$$ Recalling \rf{GESD-II}$_{1,6}$, we deduce
we get
\be
\displ\lim_RJ_{21}\leq c\lim_{R\to\infty}\sfrac1R\intll0t\dm v\dm_{L^{q+1}(R<|x|<2R)}^{q+1}d\tau =0\,.\label{nu}
\ee
For $J_{22}$ we distinguish the two cases: (i) $q\in (1,\frac43)$, and (ii)  $q\in[\frac43,2)$.
In case (ii), by H\"older inequality, we get
\be
\lim_{R\to\infty}J_{22}\leq c|\xi|\lim_{R\to\infty}R^{-1
+3\frac{2-q}2}\intll0t\dm  v\dm_{L^2(R<|x|<2R)}^qd\tau=0\,.
\label{nuovo}
\ee
Therefore, integrating both sides of \rf{LQ-O} from $0$ to $t$ and using \rf{LQ-II}--\rf{nuovo}, after letting first $\eta\to0$ and subsequently $R\to\infty$ we prove the lemma 
in the case $q\in[\frac43,2)$. 
If $q\in(1,\frac43)$, we  remark that, by interpolation, we get $v_0\in L^\frac43(\OO)$. Hence, by what we just proved, it follows  $\dm v(t)\dm_\frac43<\infty$ for all $t>0$. With this in mind,  applying H\"older's inequality,  we get 
$$
J_{22}\leq c|\xi|\lim_{R\to\infty}R^{-1
+3\frac{4-3q}4}\intll0t\dm  v\dm_{L^\frac43(R<|x|<2R)}^qd\tau=0\,,
$$
which, by the previous argument, leads again to \rf{LQ-I} also in case (i).
\ep
\section{Main Results}
\subsection{Initial datum $v_0\in L^q(\OO)$, $q\in(1,2)$}The objective of this section is the proof of the following theorem.
\begin{tho}\label{LDD}{\sl Any solution  of Lemma\,\ref{GESD} with $v_0$ satisfying the additional condition
$$
v_0\in L^q(\OO)\,, \ \  q\in(1,2)\,, 
$$
has the following asymptotic decay property:
\be\label{LDD-I}\dm v(t)\dm_2+|\xi(t)|+|\omega(t)|\leq \frac{C+\|v_0\|_q}{(t+1)^\frac{2-q}{4q}}\,, \mbox{ for all }t>0\,,\ee 
where, we recall,  $C>0$ depends only on $\Omega$ and the norm of the initial data entering \rf{GESD-I}.
}\end{tho}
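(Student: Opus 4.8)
Write $y(t):=\|v(t)\|_2^2+m|\xi(t)|^2+\omega(t)\cdot{\sf I}\cdot\omega(t)=2E(t)$. The first step is the energy identity. I would test \rf{NS}$_1$ with $v$ and integrate over $\OO$: the rotational term drops since $(\omega\times v)\cdot v=0$, and the convective term vanishes because $\n\cdot(v-V)=0$ and $v=V$ on $\po$, so that $\intl\OO[(v-V)\cdot\n v]\cdot v=\frac12\intl{\po}(v-V)\cdot n\,|v|^2=0$. Integrating the stress term by parts produces $-2\|\mathbb D(v)\|_2^2$ (using $\mathbb T(v,\pi):\n v=2|\mathbb D(v)|^2$ for solenoidal $v$) plus the boundary integral $\intl{\po}(\mathbb T(v,\pi)\cdot n)\cdot V$; inserting $V=\xi+\omega\times x$ and eliminating $\intl{\po}\mathbb T\cdot n$ and $\intl{\po}x\times\mathbb T\cdot n$ through the body equations \rf{NS}$_{3,4}$, together with $\xi\cdot(\omega\times\xi)=\omega\cdot(\omega\times{\sf I}\cdot\dot\omega)=0$, turns that integral into $-\frac m2\frac{d}{dt}|\xi|^2-\frac12\frac{d}{dt}(\omega\cdot{\sf I}\cdot\omega)$. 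The regularity in Lemma\,\ref{GESD} justifies these manipulations and yields, for a.e. $t>0$, the identity $y'(t)=-4\|\mathbb D(v(t))\|_2^2$.

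The structurally decisive step is a Korn--Sobolev inequality adapted to the fluid--body configuration, namely
\be\label{KS}\|v\|_{6}+|\xi|+|\omega|\le C_0\,\|\mathbb D(v)\|_2\,,\qquad\mbox{a.e. }t>0\,.\ee
I would prove \rf{KS} by extending $v$ to all of $\real^3$ as the rigid field $\xi+\omega\times x$ inside $\mathscr B$; the boundary condition makes the extension $u$ continuous across $\po$, solenoidal, a member of the homogeneous space $D^{1,2}(\real^3)$, and such that $\mathbb D(u)=0$ in $\mathscr B$, whence $\|\mathbb D(u)\|_{2,\real^3}=\|\mathbb D(v)\|_{2,\OO}$. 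For solenoidal fields one has the identity $\|\n u\|_{2,\real^3}^2=2\|\mathbb D(u)\|_{2,\real^3}^2$ (integrate by parts; the mixed term vanishes since $\n\cdot u=0$), so Sobolev's inequality gives $\|u\|_{6,\real^3}\le c\,\|\n u\|_{2,\real^3}=c\sqrt2\,\|\mathbb D(v)\|_{2,\OO}$. Restriction to $\OO$ controls $\|v\|_6$; restriction to the bounded set $\mathscr B$, on which $(\xi,\omega)\mapsto\xi+\omega\times x$ is injective (the origin is interior to $\mathscr B$), shows that $\|\,\cdot\,\|_{6,\mathscr B}$ is a norm on the finite--dimensional space of rigid motions, equivalent to $|\xi|+|\omega|$, and thereby controls $|\xi|+|\omega|$.

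With \rf{KS} in hand I close a differential inequality for $y$. Interpolation gives $\|v\|_2\le\|v\|_q^{\theta}\|v\|_6^{1-\theta}$ with $\theta=\frac{2q}{6-q}$ and $1-\theta=\frac{3(2-q)}{6-q}$; combining this with \rf{KS}, and using $\|\mathbb D(v)\|_2\le\|\n v\|_2\le\kappa\delta$ from \rf{goes} to absorb the rigid part $|\xi|^2+|\omega|^2\le C\|\mathbb D(v)\|_2^2\le C(\kappa\delta)^{2\theta}\|\mathbb D(v)\|_2^{2(1-\theta)}$, I obtain $y(t)\le C\bigl(1+\|v(t)\|_q^{2\theta}\bigr)\|\mathbb D(v(t))\|_2^{2(1-\theta)}$. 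Inserting Proposition\,\ref{LQ_1}, the factor $K(t):=1+\|v(t)\|_q^{2\theta}$ satisfies $K(t)\le C(1+\|v_0\|_q^{2\theta})(1+t)^{\beta}$ with $\beta=\frac{2(2-q)}{6-q}$. Setting $p:=\frac1{1-\theta}=\frac{6-q}{3(2-q)}>1$, the energy identity becomes the Bernoulli inequality $y'\le-c\,K(t)^{-p}y^{p}$. I would integrate it through $z:=y^{1-p}$, which obeys $z'\ge c(p-1)K^{-p}$, and exploit the arithmetic cancellation $\beta p=\frac23$, so that $\int_0^t K(s)^{-p}\,ds\ge c(1+\|v_0\|_q^{2\theta})^{-p}\bigl[(1+t)^{1/3}-1\bigr]$; together with the monotonicity $y(t)\le y(0)$ this gives $y(t)\le C(1+\|v_0\|_q)^{2}(1+t)^{-1/(3(p-1))}$, and since $\frac1{3(p-1)}=\frac{2-q}{2q}$, taking square roots yields exactly \rf{LDD-I}.

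The main obstacle is the inequality \rf{KS}: the rigid velocities $\xi,\omega$ must be dominated by the \emph{symmetric} gradient of $v$ alone. It is precisely the use of $\mathbb D$ rather than $\n v$ that makes the rigid extension work, because the rigid part contributes nothing to $\mathbb D(u)$ while $\n(\omega\times x)\neq0$, so an estimate phrased through $\|\n u\|_2$ would be circular. A secondary point, responsible for the rate being slower than the whole--space Navier--Stokes one, is that $K(t)$ \emph{grows} (Proposition\,\ref{LQ_1}): the final exponent is entirely dictated by the balance $\beta p=\frac23$, and one must carry the dependence on $\|v_0\|_q$ through the Bernoulli integration to recover the stated constant, the leading power of $\|v_0\|_q$ being $2\theta\cdot p/(p-1)=2$ as in \rf{LDD-I}.
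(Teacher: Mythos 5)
Your proposal is correct and reaches the stated rate, but it deviates from the paper's proof in two substantive technical choices, while sharing the same skeleton (energy identity, a functional inequality bounding $\dm v\dm_2$ by the dissipation and $\dm v\dm_q$, Proposition~\ref{LQ_1} to control the growth of $\dm v\dm_q$, and a nonlinear differential inequality). First, where you prove a Korn--Sobolev inequality $\|v\|_6+|\xi|+|\omega|\le C\|\mathbb D(v)\|_2$ by extending $v$ as a rigid field into $\mathscr B$, the paper instead invokes the Gagliardo--Nirenberg inequality for non-homogeneous boundary values from \cite{CrMa}, $\dm v\dm_2\le c\,\dm\n v\dm_2^{3\frac{2-q}{6-q}}\dm v\dm_q^{\frac{2q}{6-q}}$, i.e.\ \rf{GN}; as a consequence the paper's differential inequality controls only $h=\dm v\dm_2^2$ through the dissipation, and the rigid energy $H(t)=m|\xi|^2+\omega\cdot{\sf I}\cdot\omega$ is carried along as a separate term whose decay is extracted from its time-integrability \rf{goes} after the $(t+1)$-weighting. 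Your inequality is stronger -- it dominates the rigid velocities by $\|\mathbb D(v)\|_2$ alone -- and this is what lets your Bernoulli inequality close for the full kinetic energy $y=h+H$ in one stroke; the price is that you must actually establish \rf{KS}, whereas the paper leans on a cited result. Second, the ODE step differs: the paper multiplies by $(t+1)$ and absorbs $h(t)$ on the right via Young's inequality, while you substitute $z=y^{1-p}$ and integrate directly; both exploit the same arithmetic ($\beta p=\frac23$, equivalently the denominator $\dm v_0\dm_q+Ct^{2/3}$ in the paper), and both yield the exponent $\frac{2-q}{4q}$ and the quadratic dependence on $\dm v_0\dm_q$. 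One caution: your energy identity carries the dissipation $4\|\mathbb D(v)\|_2^2$ while the paper's carries $2\dm\n v\dm_2^2$; on an exterior domain with $v=V\neq 0$ on $\po$ these differ by boundary terms, so the two normalizations must not be mixed -- your argument is internally consistent on this point, but it is worth flagging.
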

\bp From \cite{GaLTB}, we know that the solution satisfies the energy relation:
$$\sfrac d{dt}\dm v(t)\dm_2^2+2\dm \n v(t)\dm_2^2+\sfrac d{dt}H(t)=0\,,\mbox{ for all }t>0\,,$$ where 
$H(t):=m|\xi(t)|^2+\omega(t)\cdot{\sf I}\cdot\omega(t)$\,.
We also recall the {\color{black}Gagliardo-Nirenberg inequality with non-homogeneous boundary value  \cite{CrMa}}
\be\label{GN}\dm v\dm_2\leq c\,\dm \n v\dm_2^{3\frac{2-q}{6-q}}\dm v\dm_q^\frac{2q}{6-q}\,.\ee  Setting $h(t):=\dm v(t)\dm_2^2$ and taking into account \rf{LQ-I}, from the last two displayed relations we deduce the differential inequality
$$
h'(t)+2\alpha\sfrac{h^\frac{6-q}{6-3q}}{\dm v_0\dm_q+Ct^\frac23}+H'(t)\leq 0\,,
$$
where $\alpha$ is a suitable constant.
Multiplying by $(t+1)$ both sides of differential inequality,and using Leibniz  rule, we arrive at
\be\label{LDD-II}((t+1)h(t))'+ 
2\alpha\,\sfrac{(t+1)h^\frac{6-q}{6-3q}}{\dm v_0\dm_q+Ct^\frac23}+((t+1)H(t))'\leq 
 h(t)+H(t)\,.\ee
Applying Young inequality with a suitable coefficient, we obtain for all $t>0$, 
$$
h(t)=\sfrac {(t+1)^\frac{6-3q}{6-q}h(t)}{(\dm v_0\dm_q+Ct^{\frac23})^\frac{6-3q}{6-q}}\times \sfrac {(\dm v_0\dm_q+Ct^{\frac23})^\frac{6-3q}{6-q}}{(t+1)^\frac{6-3q}{6-q}}\leq 2\alpha\sfrac{(t+1)h^\frac{6-q}{6-3q}}{\dm v_0\dm_q+Ct^\frac23}+ c\sfrac {(\dm v_0\dm_q+Ct^{\frac23})^{3\frac{2-q}{2q}}}{(t+1)^{3\frac{2-q}{2q}}}\,. $$ 
The latter allows us to increase the right-hand side of \rf{LDD-II} in the following way:
$$
((t+1)h(t))'+ 
2\alpha\sfrac{(t+1)h^\frac{6-q}{6-3q}}{\dm v_0\dm_q+Ct^\frac23}+((t+1)H(t))'\leq 
2\alpha\sfrac{(t+1)h^\frac{6-q}{6-3q}}{\dm v_0\dm_q+Ct^\frac23}+ c\sfrac {(\dm v_0\dm_q+Ct^{\frac23})^{3\frac{2-q}{2q}}}{(t+1)^{3\frac{2-q}{2q}}} +H(t)\,,
$$ 
which is equivalent to
\be\label{LDD-IIA}((t+1)h(t))'+((t+1)H(t))'\leq 
 c\sfrac {(\dm v_0\dm_q+Ct^{\frac23})^{3\frac{2-q}{2q}}}{(t+1)^{3\frac{2-q}{2q}}} +H(t)\,,
\ee
Integrating both sides of \rf{LDD-IIA}    over $[0,t]$,   and using \rf{GESD-II}$_6$, we arrive at
$$
(t+1)\Big[h(t)+H(t)\Big]\displ\leq h(0)+H(0)+ C+c\intll0t \sfrac {(\dm v_0\dm_q+C\tau^{\frac23})^{3\frac{2-q}{2q}}}{(\tau+1)^{3\frac{2-q}{2q}}}d\tau\,.
$$ 
Hence,  increasing  ${\color{black}\max\{1,t^\frac23\}}$ with $(t+1)^\frac23$, we deduce $$(t+1)\Big[h(t)+H(t)\Big] \leq h(0)+H(0)+C+c\big[\dm v_0\dm_q+C\big]\intll0t(1+\tau)^{-\frac{2-q}{2q}}d\tau\,,\mbox{ for all }t>0\,,$$ that leads to the desired result.
\ep  
\begin{rem}{\rm If we use equation  (4.13) of \cite{GaLTB} along with the estimate $$\dm \n u(t)\dm_2^2\leq c\big[\dm D^2 u(t)\dm_2^\frac23H^\frac23(t)+ H(t)+\dm \Delta u(t)\dm_2\dm u(t)\dm_2\big]\,,$$ 
from  Theorem\,\ref{LDD} it also follows the following decay 
$$
\dm \n v(t)\dm_2\leq \Big[\dm \n v_0\dm_2+ C\Big](t+1)^{-\frac {2-q}{6q}}\,,\mbox{ for all }t>0\,.$$}
\end{rem}
\subsection{Initial datum $v_0\in L^2(\OO)$}
{\color{black}In this section we   show that we still have $\|v(t)\|_2\to 0$ as $t\to\infty$, but with an unspecified rate of decay. 
\begin{tho}\label{ABLTN}{\sl There is $\delta>0$ such that any solution  given in Lemma \ref{GESD} and satisfying \rf{GESD-I} fulfills  the following  condition
\be\label{ABLN-I}\lim_{t\to\infty}\dm v(t)\dm_2=0\,.\ee}\end{tho}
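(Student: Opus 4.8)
The plan is to derive the $L^2$ statement from the $L^q$ statement of Theorem~\ref{LDD} by a density argument in the spirit of Masuda~\cite{Ma}, resting on two ingredients: an approximation of $v_0$ by data that are in addition $L^q$-summable, and a \emph{time-uniform} $L^2$-stability estimate between the small-data solutions of Lemma~\ref{GESD}. It is precisely the smallness \rf{GESD-I} together with the global bounds \rf{goes} that render such a uniform estimate conceivable; the absence of any rate in \rf{ABLN-I} is structural, mirroring the fact that the underlying linear (Stokes) flow decays to zero in $L^2$ for every datum but with no universal rate.

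I would first record the reductions. The energy relation used in the proof of Theorem~\ref{LDD} shows that $E(t)=\half(\|v(t)\|_2^2+H(t))$ is non-increasing, hence convergent, while $\xi,\omega\in W^{1,2}(0,\infty)$ (by \rf{goes}) forces $\xi(t),\omega(t)\to0$ and thus $H(t)\to0$; hence it suffices to prove $\|v(t)\|_2\to0$. Fixing $q\in(1,2)$ and leaving $\xi_0,\omega_0$ unchanged, I would approximate $v_0$ by $v_{0,k}\in\mathcal V(\OO)\cap L^q(\OO)$ sharing the boundary datum $\xi_0+\omega_0\times x$ and with $v_{0,k}\to v_0$ in $W^{1,2}(\OO)$: multiply $v_0$ by a cut-off equal to $1$ on $\OO_{R_k}$, $R_k\to\infty$, and restore both solenoidality and the (unchanged) boundary values by a Bogovskii correction in $\{R_k<|x|<2R_k\}$; the result is compactly supported, hence in $L^q$, and obeys \rf{GESD-I} for $k$ large. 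Theorem~\ref{LDD} then yields $\|v_k(t)\|_2\to0$.

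The heart of the matter is the stability estimate. With $w:=v-v_k$, $\eta:=\xi-\xi_k$, $\zeta:=\omega-\omega_k$ and $H_w:=m|\eta|^2+\zeta\cdot{\sf I}\cdot\zeta$, I would subtract the two systems \rf{NS} and test the difference of the momentum equations with $w$, as in the derivation of the energy relation, so as to use the structural cancellations of the coupled problem. The transport term $\int_\OO[(v-V)\cdot\n w]\cdot w$ vanishes (because $\n\cdot(v-V)=0$ and $v-V=0$ on $\po$), the Coriolis self-term $\int_\OO(\omega\times w)\cdot w$ vanishes by skew-symmetry, and the boundary stress of the difference, combined with the two difference body-equations, reproduces $-\half\frac{d}{dt}H_w$ together with finite-dimensional body cross-terms. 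There remains
\be
\half\frac{d}{dt}\big(\|w\|_2^2+H_w\big)+\|\n w\|_2^2=\mathcal N(t),
\ee
where $\mathcal N$ gathers the interactions that are bilinear in the difference and linear in the two backgrounds, principally $\int_\OO[(w-W)\cdot\n v_k]\cdot w$ with $W:=\eta+\zeta\times x$, the mixed Coriolis term $\zeta\cdot\int_\OO v_k\times w$, and the body cross-terms.

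The main obstacle is to absorb $\mathcal N$ and close the estimate \emph{uniformly in $t$}. The principal quadratic term is benign: the Gagliardo--Nirenberg inequality of \cite{CrMa} for fields with rigid boundary values bounds it by $\|\n w\|_2^2+c\,\|\n v_k\|_2^4\,\|w\|_2^2$, and by \rf{goes} one has $\|\n v_k\|_2^4\le(\kappa\delta)^2\|\n v_k\|_2^2\in L^1(0,\infty)$. The serious difficulty lies in the couplings carrying the unbounded rotation field $\zeta\times x$ and, above all, in the mixed Coriolis term, whose natural bound $|\zeta|\,\|v_k\|_2\,\|w\|_2$ generates a coefficient that is in general only square-integrable in time, so that a plain Gronwall argument fails to give time-uniformity. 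This is exactly where the ideas of \cite{Ma} are essential: beyond \rf{goes} one must bring in the trace and pressure estimates of Lemmas~\ref{ASS}--\ref{GPSMD} and exploit the decay $\|v_k(t)\|_2\to0$ furnished by Theorem~\ref{LDD}. Granting the resulting time-uniform bound $\sup_{t>0}\|v(t)-v_k(t)\|_2\le C\|v_0-v_{0,k}\|_2$, one concludes
$$
\limsup_{t\to\infty}\|v(t)\|_2\le\lim_{t\to\infty}\|v_k(t)\|_2+\sup_{t>0}\|v(t)-v_k(t)\|_2\le C\|v_0-v_{0,k}\|_2\to0\quad(k\to\infty),
$$
whence $\lim_{t\to\infty}\|v(t)\|_2=0$. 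I expect the time-uniformity of this stability estimate --- taming the Coriolis and rotation couplings despite their merely square-integrable coefficients --- to be the crux of the whole proof.
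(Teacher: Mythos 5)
Your proposal correctly identifies the overall shape of the argument (approximate by data in $L^q$, prove decay for the approximants, and transfer it to $v$ by a time-uniform $L^2$ comparison), but it leaves the decisive step --- the estimate $\sup_{t>0}\|v(t)-v_k(t)\|_2\le C\|v_0-v_{0,k}\|_2$ between two solutions of the \emph{coupled} system --- explicitly ``granted''. That is a genuine gap, and not a cosmetic one: the difference of the two body equations produces cross terms such as $m(\zeta\times\xi_k)\cdot\eta$ and $(\zeta\times{\sf I}\cdot\omega_k)\cdot\zeta$, which are bounded only by quantities of the form $(|\xi_k(t)|+|\omega_k(t)|)\,H_w(t)$; since \rf{goes} puts $\xi_k,\omega_k$ in $L^2(0,\infty)$ but not in $L^1(0,\infty)$, Gronwall yields a factor $\exp\bigl(c\int_0^t(|\xi_k|+|\omega_k|)\bigr)$ that need not be bounded in $t$. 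The same obstruction affects the mixed Coriolis term you single out. So the crux you flag is precisely what is missing, and there is no indication in your argument of how to overcome it.

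The paper sidesteps this difficulty by \emph{not} perturbing the coupled problem at all. It keeps the rigid velocities $\xi(t),\omega(t)$ of the original solution fixed and compares $v$ with solutions $v^m$ of a fluid-only problem \rf{NSm} driven by the \emph{same} $V=\xi+\omega\times x$ in the interior terms, but with boundary datum damped to $(1+t)^{-1/m}V$ and with initial data $v_0^m\in\mathcal V(\OO)\cap L^q(\OO)$. Consequently the difference $w^m=v^m-v$ satisfies \rf{LDC-I}, in which all body cross-terms and all Coriolis/rotation \emph{difference} terms are absent: the only surviving contributions in the energy identity are the quadratic term $(w^m\cdot\n w^m,v)$, absorbed by the smallness $\delta$ via the interpolation inequality of \cite{CrMa}, and the boundary stress term weighted by $1-(1+t)^{-1/m}$, which is killed in the limit $m\to\infty$ by dominated convergence thanks to the $L^1(0,\infty)$ bound on $\|\mathbb T\cdot n\|_1\,(|\xi|+|\omega|)$ coming from Lemmas \ref{GPSMD} and \ref{ASS}. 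The decay of $\|v^m(t)\|_2$ is then obtained not from Theorem \ref{LDD} but from Lemma \ref{VMAB}, which exploits the decaying boundary factor to get the rate $(1+t)^{-1/m}$. If you want to salvage your route, you would have to prove the coupled stability estimate uniformly in time, which appears to require substantially more than \rf{goes}; the paper's auxiliary-problem device is exactly the mechanism that makes the comparison closable.
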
 The proof of this theorem, given at the end of the section, needs  some preparatory result.}
\par
Let $\xi(t)$ and $\omega(t)$ be the vector functions determined in Lemma\,\ref{GESD}, pick a sequence $\{v_0^m\}\subset \mathcal V(\OO)\cap L^q(\OO)$ for some $q\in(1,2)$, and  consider the following initial-boundary value problem:
\be\label{NSm}\ba{l}\left.\ba{l}\medskip v_t^m=\n\cdot \mathbb T(v^m,\pi^m)-[(v^m-V)\cdot\n v^m+\omega\times v^m]\\
\n\cdot v^m=0
\ea\right\}
\mbox{\, in } (0,\infty)\times\OO\,,\VS v^m=(1+t)^{-\frac1m}V\mbox{\, on }(0,\infty)\times \po\,,\;\lim_{|x|\to\infty}v^m(t,x)=0\,,t\in(0,T)\,,
\\
 v^m=v_0^m(x)\,\mbox{ on }\{0\}\times\OO\,.
\ea\ee
\begin{lemma}\label{ExNSM}{\sl Let $\xi(t)$ and $\omega(t)$ be the vector functions determined in Lemma\,\ref{GESD} and, therefore, obeying \rf{goes}. Then, there is $\delta>0$ such that for all $v_0^m\in{\mathcal V}(\Omega)$ with boundary value $\overline{v_0^m}=V_0$, that satisfies
$$
\|v_0^{m}\|_{1,2}\le\delta\,,
$$
we can find a corresponding solution $(v^m,\pi^m)$ to \rf{NSm} such that , for all $T>0$ and $R\ge R_*$,
\be\label{ExNSM-I}\ba{l} v^m\in L^\infty(0,\infty;W^{1,2}(\OO))\cap L^2(0,\infty;W^{2,2}(\OO))\,, \;\n\pi^m\in L^2(0,\infty;L^{2}(\OO))\,,\VS v^m\in C([0,T];W^{1,2}(\OO_R))\,,\;v_t^m\in L^2(0,T;L^{2}(\OO_R))\,.\ea\ee 
Moreover, assuming $v_0^m\in L^q(\OO)$ for some $q\in (1,2)$,  we have also
\be\label{LQ_0} \dm v^m(t)\dm_q\leq c(v_0^m)(1+t)^\frac {2-q}{2q}\,,\mbox{ for all }t>0\,.\ee
}\end{lemma}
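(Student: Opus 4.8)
The plan is to exploit the fact that in \rf{NSm} the body velocities $\xi$ and $\omega$—and hence the drift field $V=\xi+\omega\times x$ and the zeroth–order term $\omega\times v^m$—are \emph{prescribed} functions obeying \rf{goes}. Thus, unlike the coupled system \rf{NS}, the problem \rf{NSm} is decoupled from any rigid–body ODE and reduces to a single Navier--Stokes--type problem for $(v^m,\pi^m)$ in the fixed exterior domain $\OO$. I would first lift the nonhomogeneous datum: since the flux of $V$ through $\po$ vanishes (a rigid velocity has zero net flux across a closed surface), there is a solenoidal extension $\mathbf B(t,x)$ of $V(t,\cdot)$ supported in a bounded neighborhood of $\po$, and $(1+t)^{-1/m}\mathbf B$ extends the boundary value in \rf{NSm} (note that at $t=0$ it equals $V_0$, matching $\overline{v_0^m}=V_0$). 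Setting $u:=v^m-(1+t)^{-1/m}\mathbf B$ yields a problem with homogeneous boundary conditions for $u$, forced by terms linear in $\mathbf B$ and by a time–derivative contribution carrying the harmless factor $(1+t)^{-1/m-1}$.

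Second, I would establish the regularity class \rf{ExNSM-I} by a Galerkin scheme combined with zeroth– and first–order energy estimates, exactly along the lines of \cite[Theorem 2.1]{GaLTB} (hence of Lemma \ref{GESD}). Testing the equation for $u$ with $u$ and with its Stokes–operator image, and invoking the smallness $\dm v_0^m\dm_{1,2}\le\delta$ together with the global smallness $\int_0^\infty(|\xi|^2+|\omega|^2+\cdots)\le\kappa\delta$ from \rf{goes}, one closes the a priori bounds and obtains \rf{ExNSM-I}; the decaying factor $(1+t)^{-1/m}$ only improves integrability and never obstructs the estimates. I do not expect this step to be the obstacle.

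The heart of the lemma is the $L^q$–bound \rf{LQ_0}, and here I would simply rerun the weighted test–function argument of Proposition \ref{LQ_1}. Multiplying \rf{NSm}$_1$ by $h_Rv^m(|v^m|^2+\eta)^{(q-2)/2}$ and integrating by parts on $(0,t)\times\OO$ produces the exact analog of \rf{LQ-O}, with boundary term $\Sigma^m=\intl\po n\cdot\n v^m\cdot v^m(|v^m|^2+\eta)^{q-2}$. The three right–hand terms are controlled verbatim: the pressure term by \rf{LGPG-II} and Sobolev embedding as in \rf{LQ-II}; the far–field cut–off term $J$ by the argument \rf{nu}--\rf{nuovo}, which uses only the global energy bounds together with $v^m(t)\in L^q$ (propagated from $v_0^m\in L^q$, with the same $q\in(1,\tfrac43)$ versus $q\in[\tfrac43,2)$ bootstrap through $L^{4/3}$, and with $(v^m-V)\cdot\n h_R=(v^m-\xi)\cdot\n h_R$ since $h_R$ is radial); and the boundary term by the analog of Lemma \ref{ASS} for $v^m$, namely $\int_0^\infty\dm v^m\dm_\infty^2\,dt+\int_0^\infty\dm \mathbb T(v^m,\pi^m)\cdot n\dm_1^2\,dt\le C$, which follows from \rf{ExNSM-I} (giving $v^m\in L^2(0,\infty;W^{2,2})$ and $\n\pi^m\in L^2(0,\infty;L^2)$) exactly as in the proof of Lemma \ref{ASS}; on $\po$ one has $v^m=(1+t)^{-1/m}V$, so the boundary factor is uniformly bounded and its decay only helps. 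Integrating in time yields $\dm v^m(t)\dm_q^q\le \dm v_0^m\dm_q^q+Ct^{(2-q)/2}$, and taking $q$–th roots (using subadditivity of $s\mapsto s^{1/q}$ for $q>1$) together with $t^{(2-q)/(2q)}\le(1+t)^{(2-q)/(2q)}$ gives \rf{LQ_0}, with $c(v_0^m)=\dm v_0^m\dm_q+C^{1/q}$ (so the constant depends on $v_0^m$ only through $\dm v_0^m\dm_q$, since $\dm v_0^m\dm_{1,2}\le\delta$ makes $C$ uniform).

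The only genuinely delicate point is verifying that the preparatory estimates underlying Proposition \ref{LQ_1}—the pressure bound \rf{LGPG-II}, the dissipation bound \rf{GPSMD-I}, and the trace/embedding bound of Lemma \ref{ASS}—transfer to $(v^m,\pi^m)$. This is exactly where the decoupled, decaying–boundary structure of \rf{NSm} is used: because $\xi,\omega$ are the same prescribed functions and $v^m$ obeys the global bounds in \rf{ExNSM-I}, each of these three lemmas applies to $(v^m,\pi^m)$ with at most cosmetic changes—the Cauchy--Stokes pressure estimate and the $L^2$–in–time dissipation estimate are insensitive to the value of the boundary datum, while the $(1+t)^{-1/m}$ factor keeps the boundary contributions uniformly bounded. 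Once these transfers are confirmed, \rf{LQ_0} follows as above.
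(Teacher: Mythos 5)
Your proposal is correct and follows essentially the same route as the paper: lift the boundary datum by a compactly supported solenoidal extension multiplied by $(1+t)^{-1/m}$, obtain the class \rf{ExNSM-I} via Galerkin plus the zeroth- and first-order energy estimates of \cite[Theorem 2.1]{GaLTB} (the paper additionally spells out the invading-domains step on $\Omega_{R_k}$ with $k$-uniform bounds), and then derive \rf{LQ_0} by checking that the analogues of Lemmas \ref{LGPG}, \ref{GPSMD} and \ref{ASS} hold for $(v^m,\pi^m)$ so that the proof of Proposition \ref{LQ_1} can be rerun verbatim. Your identification of the transfer of those preparatory estimates as the only delicate point matches exactly how the paper closes the argument.
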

\begin{proof} The proof of existence of solutions to \rf{NSm} in the class \rf{ExNSM-I} under  smallness assumption of the data can be viewed as a particular (actually simpler) case of
that employed for Lemma \ref{GESD}. We will just sketch the main aspects, referring the reader to \cite{GaLTB} to fill the missing steps. In the first place, we lift the velocity $V$ at the boundary. Precisely,  there exists  a solenoidal function $\widehat{V}\in W^{1,2}(0,\infty;W^{2,2}(\Omega))$ of bounded support in $\Omega$  such that
\be\ba{ll}\medskip
\widehat{V}(x,t)=V(x,t)\,,\ (t,x)\in (0,\infty)\times\Omega\,,\\ \medskip
\|\widehat{V}\|_{W^{1,2}(0,\infty;W^{2,2}(\Omega))}\le c\, \left(|\xi|_{W^{1,2}(0,\infty)}+|\omega|_{W^{1,2}(0,\infty)}\right)\,,\\
|(u\cdot\n\widehat{V},u)|\le \mbox{$\frac14$}\|\n u\|_2^2\,,\ \ u\in W^{1,2}(\Omega)\,;
\ea\label{V}
\ee
see \cite[Lemma 2.2]{GaSi}.
Thus, setting $u^m:=v^m-h^m\widehat{V}$, $h^m:=(1+t)^{-\frac1m}$, and omitting all superscripts for simplicity, from \rf{NSm} we deduce that $u\equiv u^m$ satisfies the following problem
\be\label{NSm1}\ba{l}\left.\ba{l}\medskip u_t=\n\cdot \mathbb T(u,\pi)-(u-V)\cdot\n u+\omega\times u-hu\cdot\n \widehat{V}-h\widehat{V}\cdot\n u+f\\
\n\cdot u=0
\ea\right\}
\mbox{\, in } (0,\infty)\times\OO\,,\VS u=0\mbox{\, on }(0,\infty)\times \po\,,
\\
 u=v_0(x)-\widehat{V}(x,0)\,\mbox{ on }\{0\}\times\OO\,.
\ea\ee
where $f\in L^2(0,\infty;L^{2}(\Omega))$ has bounded support in $\Omega$ and is such that
\be\label{f}
\int_0^\infty\|f(t)\|_2^2dt\le c\,\int_0^\infty(|\xi(t)|^2+|\omega(t)|^2+|\dot{\xi}(t)|^2+|\dot{\omega}(t)|^2)dt
. 
\ee
It is obvious that if we show existence of a solution $(u,\pi)$ to \rf{NSm1} in the class \rf{ExNSM-I}, then the corresponding $(v,\pi)$ is a solution to the original problem in the same class. 
Now, the existence proof to \rf{NSm1} is  based on the so called ``invading domains" technique. Namely, let $\{\Omega_{R_k}\}$ be an increasing sequence of (bounded) domains with $\cup_{k=1}^\infty\Omega_{R_k}=\Omega$. In each $\Omega_{R_k}$ we consider the ``approximating problem" 
\be\label{NSmk}\ba{l}\left.\ba{l}\medskip u_t=\n\cdot \mathbb T(u,\pi)-(u-V)\cdot\n u+\omega\times u-hu\cdot\n \widehat{V}-h\widehat{V}\cdot\n u+f\\
\n\cdot u=0
\ea\right\}
\mbox{\, in } (0,\infty)\times\OO_{R_k}\,,\VS u=0\mbox{\, on }(0,\infty)\times \po\cup \partial B_{R_k}\,,
\\
 u=v_0(x)-\widehat{V}(x,0)\,\mbox{ on }\{0\}\times\OO_{R_k}\,.
\ea\ee
The crucial point of the above technique is to show existence of solutions to \rf{NSmk} in the class \rf{ExNSM-I}, with bounds for the corresponding norms that are independent of $k$. All these properties, in our case, are shown by using the Galerkin method, with a special basis constituted by eigenvectors of the Stokes operator \cite[Section 4]{GaSi}. The basic estimates are then obtained by formally testing \rf{NSmk}$_1$ by $u$ on time and, a second time by ${\sf P}\Delta u$, where ${\sf P}$ is the Helmholtz-Weyl projector. We thus obtain the following two ``energy relations"
\be\label{energy}\ba{ll}\medskip
\sfrac12\sfrac d{dt}\|u(t)\|_2^2+\|\nabla u\|_2^2= -h(u\cdot\n\widehat{V},u)+(f,u)\\ \medskip
\sfrac12\sfrac d{dt}\|\nabla u(t)\|_2^2+\|{\sf P}\Delta u|_2^2=-((u-\xi)\cdot\n u,{\sf P}\Delta u) +(\omega\times x\cdot\n u-\omega\times u,{\sf P}\Delta u)\\ 
\hspace*{4.4cm}-(hu\cdot\n \widehat{V}-h\widehat{V}\cdot\n u+f,{\sf P}\Delta u)\,,
\ea
\ee
where $(\cdot,\cdot)$ denotes the $L^2(\Omega_{R_k})$-scalar product. Taking into account \rf{V}$_{2,3}$, from \rf{energy}$_1$ and Sobolev inequality $\|u\|_6\le c\,\|\n u\|_2$, we easily deduce
\be\label{en1}
\sfrac d{dt}\|u(t)\|_2^2+\|\n u\|_2^2\le c\,\|f \|_2^2
\ee
with $c$ independent of $k$. 
Furthermore, arguing as in \cite[Theorem 2.1]{GaLTB}, we show
\be\ba{rl}\medskip
|((u-\xi)\cdot\n u,{\sf P}\Delta u)|
&\!\!\!\le c_\eta\,(\|\n u\|_2^2+\|\n u\|_2^6)+\eta\|D^2 u\|_2^2\\
|(\omega\times x\cdot\n u-\omega\times u,{\sf P}\Delta u)|&\!\!\!\le c_\eta\,\|\n u\|_2^2+\eta \|D^2 u\|_2^2
\ea
\label{en2}
\ee
where $\eta>0$ is arbitrary and $c_\eta$ does not depend on $k$. Also, again by  \rf{V},  and Sobolev inequality we infer
\be\label{en3}
|(hu\cdot\n \widehat{V}-h\widehat{V}\cdot\n u+f,{\sf P}\Delta u)|\le c_\eta\,(\|\n u\|_2^2+\|f\|_2^2)+\eta\|D^2 u\|_2^2\,.
\ee
We next recall the following inequality, due to J.G. Heywood \cite[Lemma 1]{Hey}
\be\label{Hey}
\|D^2u\|_{2,\Omega_{R_k}}\le c\,(\|{\sf P}\Delta u\|_{2,\Omega_{R_k}}+\|\n u\|_{2,\Omega_{R_k}})\,,
\ee
where $c$ is independent of $k$.
Thus, employing \rf{en2}, \rf{en3} and \rf{Hey} in \rf{energy}$_2$, and taking $\eta$ sufficiently small, we deduce
\be\label{MC}
\sfrac d{dt}\|\nabla u(t)\|_2^2+c_1\|D^2 u\|_2^2\le c_2\,(\|\n u\|_2^2+\|\n u\|_2^6+\|f\|_2^2)\,,
\ee
where both $c_1$ and $c_2$ are independent of $k$. Next, integrating both sides of  \rf{en1},  using \rf{V}$_2$, \rf{f} and classical embedding theorems, we obtain
\be\ba{rl}\medskip\label{ok}
\essup{t>0}\,\|u(t)\|_2^2+\displaystyle \int_0^\infty\|\nabla u(t)\|_2^2dt&\!\!\!\le \Big(\|v_0^m\|_2^2+\|\widehat{V}(0)\|_2^2+c\displaystyle\int_0^\infty\|f(t)\|_2^2dt\Big)\\
&\!\!\!\le c\left(\|v_0^m\|_2^2+\|\xi\|_{W^{1,2}(0,\infty)}^2+\|\omega\|_{W^{1,2}(0,\infty)}^2\right)\,.
\ea
\ee
As a result, from \rf{MC} and \cite[Lemma 4.3]{GaLTB} it follows that if the right-hand side of \rf{ok} is taken less than a sufficiently small $\delta$ (the latter is ensured by the assumption on $v_0^m$ and \rf{goes}), there is $C_1>0$, independent of $k$, such that
\be\label{D}
\essup{t>0}\,\|\n u(t)\|_2\le C_1\delta\,.
\ee
Replacing this information back into \rf{MC},  integrating both sides of the resulting inequality over $(0,\infty)$ and keeping in mind \rf{ok}, we also deduce
\be
\label{D2}
\int_0^\infty\|D^2 u(t)\|_2^2dt\le C_2\,\delta\,,
\ee
where $C_2$ does not depend on $k$.
Collecting \rf{ok}--\rf{D2}, we infer that
\be\label{EsT}
u\in L^\infty(0,\infty;W^{1,2}(\Omega))\cap L^2(0,\infty;W^{2,2}(\Omega))
\ee
with corresponding norms uniformly bounded in $k$. Furthermore, let $R_{k_0}$ be arbitrarily fixed. Then, proceeding exactly as in \cite[Lemma 4.3]{GaSi} we can show the following estimate, for all $T>0$, and all $k\ge k_0$
\be\label{nero}
\int_0^T\|u_t(t)\|_{2,\Omega_{R_{k_0}}}^2dt\le C_0\,,\ \ \int_0^T\|\n\pi(t)\|_2^2dt\le C
\ee
where $C_0$ depends on $k_0$ but not on $k$, whereas $C$ only on the data and $\Omega$. Combining \rf{EsT} and \rf{nero} with Galerkin method, we thus prove the existence of a solution to  \rf{NSmk} in the class \rf{ExNSM-I}, with $\Omega$ and $\Omega_R$ replaced by $\Omega_{R_k}$; see \cite{GaSi} for details. However, since all the previous bounds are independent of $k$, we can pass to the limit $k\to\infty$ and show that the sequences $\{(u_k,\pi_k)\}$ of solutions to \rf{NSmk} converge, in suitable topologies to a solution $(u,\pi)$ of the original problem \rf{NSm} that belongs in the class \rf{ExNSM-I}; see \cite{GaSi} for details. Next, employing the same arguments of \cite[Lemma 3.2 and Proposition 2.1]{GaLTB} we can show that the solution $(v^m,\pi^m)$ just obtained satisfies also the properties stated in Lemmas \ref{LGPG} and \ref{GPSMD} and, by exactly the same proof, Lemma \ref{ASS}. As a result, we can follow the same steps of the proof of Proposition \ref{LQ_1} to prove \rf{LQ_0}.
\end{proof}
\begin{lemma}\label{VMAB}{\sl Let $(v^m,\pi^m)$ be the solution given Lemma\,\ref{ExNSM}. Then, 
\be\label{VMAB-I}\dm v^m(t)\dm_2\leq c(\dm v_0^m\dm_2,\dm v_0^m\dm_q)(1+t)^{-\frac1m}\,,\mbox{ for all }t>0\,.\ee}\end{lemma}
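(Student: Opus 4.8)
The plan is to mirror the argument of Theorem~\ref{LDD}, the new ingredient being that the boundary datum of \rf{NSm}, and therefore the volume forcing created by lifting it, decay like $h^m(t)=(1+t)^{-\frac1m}$. As in the proof of Lemma~\ref{ExNSM} I set $u:=v^m-h^m\widehat V$, which has zero boundary value and solves \rf{NSm1}, so that the energy inequality \rf{en1} is at my disposal. The first step is to record that the forcing inherits the decay of the datum: inspecting the explicit form of $f$ produced by substituting $v^m=u+h^m\widehat V$ into \rf{NSm}, every term carries a factor $h^m$ or $\dot h^m=-\frac1m(1+t)^{-\frac1m-1}$, whence, by \rf{V}$_2$ and the bounds in \rf{goes},
\be\label{plan-f}
\dm f(t)\dm_2\le (1+t)^{-\frac1m}\,\Theta(t)\,,\qquad \Theta\in L^2(0,\infty)\,,\ \intll0\infty\Theta^2\,d\tau\le C\,.
\ee

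Next I would turn dissipation into decay exactly as in Theorem~\ref{LDD}. Since $\dm u\dm_q\le\dm v^m\dm_q+h^m\dm\widehat V\dm_q\le C(1+t)^{\frac{2-q}{2q}}$ by \rf{LQ_0} and \rf{V}$_2$, the Gagliardo--Nirenberg inequality \rf{GN} applied to $u$ gives $\dm\n u\dm_2^2\ge c\,y^{p}(1+t)^{-\frac23}$, where $y(t):=\dm u(t)\dm_2^2$ and $p:=\frac{6-q}{3(2-q)}>1$. Feeding this and \rf{plan-f} into \rf{en1} produces the differential inequality
\be\label{plan-di}
y'(t)+c\,\frac{y^{p}}{(1+t)^{\frac23}}\le c\,(1+t)^{-\frac2m}\,\Theta^2(t)\,.
\ee

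To read off the rate $(1+t)^{-\frac2m}$ for $y$ I would multiply \rf{plan-di} by $(1+t)^{\frac2m}$ and set $Y:=(1+t)^{\frac2m}y$; the weight generates the borderline term $\frac2m(1+t)^{-1}Y$ on the right, while the dissipation becomes $c(1+t)^{-\rho}Y^{p}$ with $\rho=\frac23+\frac2m(p-1)$. A Young inequality with exponents $p,p'$ then absorbs $\frac2m(1+t)^{-1}Y$ into $\frac c2(1+t)^{-\rho}Y^{p}$ at the price of a remainder $C(1+t)^{\mu}$, with $\mu=\frac2m-\frac{2+q}{2q}$. Integrating over $[0,t]$ and using \rf{plan-f}, one gets $Y(t)\le C$, that is $\dm u(t)\dm_2\le C(1+t)^{-\frac1m}$, as soon as the remainder is integrable, i.e. $\mu<-1$; this holds provided $m>\frac{4q}{2-q}$, which is precisely the regime relevant to the limit $m\to\infty$ exploited in Theorem~\ref{ABLTN}. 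Since $\dm v^m\dm_2\le\dm u\dm_2+h^m\dm\widehat V\dm_2$ and the last term is bounded by $C(1+t)^{-\frac1m}$ through \rf{V}$_2$ and \rf{goes}, this yields \rf{VMAB-I}, and keeping track of the constants (through $Y(0)$ and the constant in \rf{LQ_0}) shows the asserted dependence on $\dm v_0^m\dm_2$ and $\dm v_0^m\dm_q$.

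The step I expect to be delicate is this last absorption: the time weight $(1+t)^{\frac2m}$, indispensable for isolating the claimed rate, unavoidably creates the critical term $\frac2m(1+t)^{-1}Y$, and the argument succeeds only because the superlinear dissipation $(1+t)^{-\rho}Y^{p}$ supplied by \rf{GN} together with the growth bound \rf{LQ_0} dominates it after integration. This is also where one sees that $1/m$ is the binding decay rate: the purely dissipative rate $\frac{2-q}{4q}$ of Theorem~\ref{LDD} is the faster one once $m$ is large, so that the decay of $v^m$ is slaved to that of its boundary datum.
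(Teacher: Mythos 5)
Your argument is correct and reaches \rf{VMAB-I} (for $m>\frac{4q}{2-q}$), but it takes a genuinely different route from the paper's. The paper does \emph{not} pass to the lifted unknown $u=v^m-h^m\widehat V$: it tests \rf{NSm}$_1$ directly with $v^m$, so the decay of the boundary datum enters through the surface integral $(1+t)^{-\frac1m}\int_{\po}n\cdot\mathbb T(v^m,\pi^m)\cdot V$, which is bounded via the trace theorem by $G^m(t)(1+t)^{-\frac1m}$ with $G^m\in L^1(0,\infty)$ uniformly in $m$ (this uses the analogues of Lemmas \ref{GPSMD} and \ref{ASS} for $v^m$ noted at the end of the proof of Lemma \ref{ExNSM}). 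It then runs the same mechanism you use --- the Gagliardo--Nirenberg inequality \rf{GN} combined with the growth bound \rf{LQ_0} to create the superlinear dissipation $c\dm v^m\dm_2^{2\frac{6-q}{3(2-q)}}(1+t)^{-\frac23}$, a time weight, and a Young absorption of the linear term --- but with the fixed weight $(1+t)$ instead of your $m$--adapted weight $(1+t)^{\frac2m}$. The trade-off is instructive: the fixed weight imposes no restriction on $m$, but after integration the decay is read off from $(1+t)\dm v^m\dm_2^2\le C+C(1+t)^{1-\frac1m}+C(1+t)^{1-\frac{2-q}{2q}}$, i.e.\ the exponent $-\frac1m$ is captured at the level of $\dm v^m\dm_2^2$; your weight delivers the stated rate for the norm itself, at the price of the constraint $m>\frac{4q}{2-q}$ needed for the Young remainder $(1+t)^{\mu}$ to be integrable --- a constraint you rightly flag as harmless, since in the proof of Theorem \ref{ABLTN} one may take $\overline m$ as large as one pleases. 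Two points you should make explicit if you write this up: (i) the factorization $\dm f(t)\dm_2\le(1+t)^{-\frac1m}\Theta(t)$ with $\Theta\in L^2(0,\infty)$ needs a check of the term $\dot h^m\widehat V=-\frac1m(1+t)^{-1-\frac1m}\widehat V$, whose residual factor $\frac1m(1+t)^{-1}\dm\widehat V\dm_2$ lies in $L^2(0,\infty)$ by \rf{V}$_2$, and of the term $h^m V\cdot\n\widehat V$, where $V$ itself carries no factor $h^m$ but is controlled on the (bounded) support of $\widehat V$ by $|\xi|+|\omega|\in L^2(0,\infty)$ from \rf{goes}; (ii) the return from $u$ to $v^m$ uses $\dm\widehat V(t)\dm_2\le c(|\xi(t)|+|\omega(t)|)\le C$, and the constant tracks $\dm v_0^m\dm_2$ through $Y(0)$ and $\dm v_0^m\dm_q$ through the constant in \rf{LQ_0}, as claimed in the statement.
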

\bp Testing \rf{NSm}$_1$ with $v^m$, we get
\be\label{enre}\sfrac12\sfrac d{dt}\dm v^m(t)\dm_2^2+\dm \n v^m\dm_2^2=(1+t)^{-\frac1m}\intl{\po}n\cdot \mathbb T(v^m,\pi^m) \cdot V\,.\ee
By the trace theorem, it follows
$$\Big|\intl{\po}n\cdot \mathbb T(v^m,\pi^m) \cdot V\Big|\leq c\Big[\dm \n\cdot \mathbb T(v^m,\pi^m)\dm_2+\dm \mathbb T(v^m,\pi)\dm_2\Big]\big[|\xi(t)|+|\omega(t)|\big]\,,\mbox{ for all }t>0\,.$$
Moreover, {\color{black} employing again \rf{GN}}, we find
$$\dm v^m\dm_2\leq c\dm \n v^m\dm_2^a\dm v^m\dm_q^{1-a}\,,\mbox{ with }a:=\sfrac{3(2-q)}{6-q}\,.$$
Thus, combining the latter with \rf{LQ_0} and \rf{enre}, we show
\be\label{VMAB-III}\sfrac d{dt}\dm v^m\dm_2^2+c\dm v^m\dm_2^{2\frac{6-q}{3(2-q)}}\dm v_0\dm_q^{-\frac{4q}{3(2-q)}}(1+t)^{-\frac23}\leq G^m(t)(1+t)^{-\frac1m}\,,\mbox{ for all }t>0\,,\ee where we set $$G^m(t):= c\Big[\dm \n\cdot \mathbb T(v^m,\pi^m)\dm_2+\dm \mathbb T(v^m,\pi)\dm_2\Big]\big[|\xi(t)|+|\omega(t)|\big] \,.$$ We  have \be\label{AUX}G^m(t)\in L^1(0,\infty)\,,\mbox{ with a bound uniform with respect to } m\,.\ee This follows from  Lemma \,\ref{ExNSM} and \rf{goes}.
If we multiply both sides of  \rf{VMAB-III} by $(1+t)$ and apply the rule for derivative of a product, we get
\be\label{VMAB-II}\sfrac d{dt}(1+t)\dm v^m\dm_2^2+c_0\dm v^m\dm_2^{2\frac{6-q}{3(2-q)}}\dm v_0\dm_q^{-\frac{4q}{3(2-q)}}(1+t)^{\frac13}\leq \dm v^m\dm_2^2+ G^m(t)(1+t)^{1 -\frac1m}\,,\mbox{ for all }t>0\,.\ee
Using in the latter the Young inequality, we get
$$\dm v^m\dm_2^2=\sfrac{\dm v^m\dm_2^2(1+t)^\frac{2-q}{6-q}}{\dm v_0^m\dm_q^\frac{4q}{6-q}}\times\sfrac{\dm v_0^m\dm_q^ \frac{4q}{6-q}}{(1+t) ^\frac{2-q}{6-q}}\leq c_0\dm v^m\dm_2^{2\frac{6-q}{3(2-q)}}\dm v_0^m\dm_q^{-\frac{ 4q}{3(2-q)}}(1+t)^{\frac13}+c\dm v_0^m\dm_q^2(1+t)^{-\frac{2-q}{2q} }\,.$$ Thus, if we  employ this inequality on the right hand side of \rf{VMAB-II},  integrate over $(0,t)$, and bear in mind \rf{AUX}, we arrive  to show \rf{VMAB-I}. 
\ep
The next result proves that the sequence $\{v^m\}$ converges in $L^2$ to $v$,  uniformly in $t>0$, provided only $v_0^m\to v_0$ in $L^2$.
\begin{lemma}\label{LDC}{\sl Let $v_0\in \mathcal V(\OO)$ and assume $\{v_0^m\}\subset \mathcal V(\OO)$ converges to $v_0$ in $L^2(\OO)$. Then the sequence of solutions $\{(v^m,\pi^m)\}$ corresponding to $\{v_0^m\}$ converges in $L^2$, uniformly in time, to the solution $(v,\pi)$  to \rf{NS} given in Lemma \ref{GESD}.}\end{lemma}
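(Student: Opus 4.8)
The plan is to estimate the difference $w^m:=v^m-v$ (with $p^m:=\pi^m-\pi$) by an energy argument, after converting the small mismatch of the boundary data into an interior forcing. Subtracting \rf{NS} from \rf{NSm} and writing $v^m=v+w^m$, the convective part telescopes into $(v-V)\cdot\n w^m+w^m\cdot\n v+w^m\cdot\n w^m$, so that $w^m$ solves
$$w^m_t=\n\cdot\mathbb T(w^m,p^m)-(v-V)\cdot\n w^m-w^m\cdot\n v-w^m\cdot\n w^m-\omega\times w^m\,,\quad \n\cdot w^m=0\,,$$
in $(0,\infty)\times\OO$, with $w^m(0)=v_0^m-v_0$ and the non-homogeneous boundary condition $w^m=\phi^m(t)\,V$ on $\po$, where $\phi^m(t):=(1+t)^{-\frac1m}-1$ satisfies $|\phi^m|\le1$ and $\phi^m(t)\to0$ for each fixed $t$. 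To exploit the smallness of the mismatch I would lift it by the same field $\widehat V$ of Lemma \ref{ExNSM}, setting $\psi^m:=w^m-\phi^m\widehat V$; then $\psi^m$ is solenoidal, vanishes on $\po$, and $\psi^m(0)=v_0^m-v_0$.

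The core step is to test the $w^m$–equation with $\psi^m$ and integrate over $\OO$. Since $\psi^m$ is divergence free and vanishes on $\po$, the pressure disappears and the stress term yields $\|\n\psi^m\|_2^2$, up to the cross term $-2\phi^m(\mathbb D(\widehat V),\mathbb D(\psi^m))$; the time derivative produces $\frac{d}{dt}\|\psi^m\|_2^2$ together with the forcing $((\phi^m\widehat V)_t,\psi^m)$. Substituting $w^m=\psi^m+\phi^m\widehat V$ throughout, I would group the right-hand side into three families. The structurally null terms vanish identically: $((v-V)\cdot\n\psi^m,\psi^m)=0$ and $(\widehat V\cdot\n\psi^m,\psi^m)=0$ (both fields are divergence free and $\psi^m|_{\po}=0$), $(\psi^m\cdot\n\psi^m,\psi^m)=0$, and $(\omega\times\psi^m,\psi^m)=0$ pointwise. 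The quadratic remainders are absorbed by smallness: $(\psi^m\cdot\n v,\psi^m)$ is estimated through $\|\psi^m\|_4^2\le c\|\psi^m\|_2^{1/2}\|\n\psi^m\|_2^{3/2}$ and Young's inequality by $\epsilon\|\n\psi^m\|_2^2+c\|\n v\|_2^4\|\psi^m\|_2^2$, while $\phi^m(\psi^m\cdot\n\widehat V,\psi^m)$ is bounded directly by \rf{V} as $\le\frac14\|\n\psi^m\|_2^2$. By \rf{goes} the coefficient $g^m:=c\|\n v\|_2^4$ lies in $L^1(0,\infty)$, with a bound independent of $m$.

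It remains to treat the genuine forcing generated by the lifting, i.e. the terms carrying an explicit $\phi^m$ or $\dot\phi^m$ and one factor among $\widehat V,\widehat V_t,\n\widehat V,(v-V)\cdot\n\widehat V,\widehat V\cdot\n v,\widehat V\cdot\n\widehat V$. The key is that $\widehat V$ has bounded support: in each such term the single factor $\psi^m$ is placed in $L^6$ and absorbed into $\epsilon\|\n\psi^m\|_2^2$ by Sobolev's inequality, while the companion factor is estimated in $L^{6/5}$ over the fixed support of $\widehat V$. The resulting forcing $F^m$ is a finite sum of terms of type $|\phi^m|^2(\cdot)$ and $|\dot\phi^m|^2(\cdot)$, each dominated by a fixed $L^1(0,\infty)$ function (built from $\|\n v\|_2^2$, $\|\widehat V_t\|_2^2$, $\|\n\widehat V\|_2^2$, $|\xi|^2+|\omega|^2$, all integrable by \rf{goes} and \rf{V}); since $|\phi^m|\le1$ with $\phi^m\to0$ pointwise and $|\dot\phi^m|\le\frac1m$, dominated convergence gives $\|F^m\|_{L^1(0,\infty)}\to0$. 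Choosing the accumulated $\epsilon<\frac14$ I obtain
$$\frac{d}{dt}\|\psi^m(t)\|_2^2+\frac14\|\n\psi^m(t)\|_2^2\le g^m(t)\,\|\psi^m(t)\|_2^2+F^m(t)\,,$$
whence, by Gronwall's lemma,
$$\sup_{t\ge0}\|\psi^m(t)\|_2^2\le e^{\|g^m\|_{L^1(0,\infty)}}\big(\|v_0^m-v_0\|_2^2+\|F^m\|_{L^1(0,\infty)}\big)\longrightarrow0$$
as $m\to\infty$, since $v_0^m\to v_0$ in $L^2(\OO)$ and $\|F^m\|_{L^1}\to0$.

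To recover $w^m$ I add the lifting back, and here lies the one delicate point: $\phi^m$ is \emph{not} uniformly small in $t$ (indeed $\phi^m(t)\to-1$ as $t\to\infty$), so $\sup_t\|\phi^m\widehat V\|_2\to0$ cannot follow from smallness of $\phi^m$ alone. Instead I would use that, by \rf{V}, $\widehat V\in W^{1,2}(0,\infty;W^{2,2}(\OO))$, so $\widehat V(t)\to0$ in $W^{2,2}(\OO)$; splitting $[0,\infty)$ at a large $T$ with $\|\widehat V(t)\|_2\le\epsilon$ for $t>T$ and using $\sup_{t\le T}|\phi^m(t)|\to0$ yields $\sup_t\|\phi^m(t)\widehat V(t)\|_2\to0$. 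Therefore
$$\sup_{t\ge0}\|v^m(t)-v(t)\|_2\le\sup_{t\ge0}\|\psi^m(t)\|_2+\sup_{t\ge0}\|\phi^m(t)\widehat V(t)\|_2\longrightarrow0\,,$$
which is the claimed uniform-in-time $L^2$ convergence. The main obstacle is exactly this interaction between the lifting and the large-time regime: because $\phi^m$ is only pointwise (not uniformly) small, every lifting-generated term must be kept integrable in time and forced to vanish in the limit by leaning on the temporal decay of $\widehat V$ (hence of $\xi,\omega$) and on the bounded support of $\widehat V$, rather than on a naive smallness of the boundary mismatch.
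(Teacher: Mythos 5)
Your argument is correct, but it takes a genuinely different route from the paper's. The paper also works with $w^m:=v^m-v$, but it tests the difference equation with $w^m$ itself, accepting the non-homogeneous boundary value $w^m=\big[(1+t)^{-\frac1m}-1\big]V$ on $\po$: the pressure then survives as a boundary stress integral $\big[1-(1+t)^{-\frac1m}\big]\int_{\po}n\cdot\mathbb T(w^m,p^m)\cdot V$, which is controlled by trace theorems through the bound $\int_0^\infty\big[\|\n\cdot\mathbb T(w^m,p^m)\|_2+\|\mathbb T(w^m,p^m)\|_2\big]^2\le C$ uniformly in $m$ (this is where Lemmas \ref{GPSMD} and \ref{ASS} and their analogues for $v^m$ enter), together with $|\xi|+|\omega|\in L^2(0,\infty)$; the nonlinear term $(w^m\cdot\n w^m,v)$ is absorbed into the dissipation via $\|v\|_3\le c\,\|v\|_2^{1/2}\|\n v\|_2^{1/2}\le c\,\delta$, so no Gronwall factor is needed, and the conclusion follows from a single integrated inequality plus dominated convergence applied to $\big[1-(1+t)^{-\frac1m}\big]^2\big[|\xi|+|\omega|\big]^2$. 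Your lifting by $\widehat V$ kills both the pressure and the boundary integral at the source, so you never need the $L^2(0,\infty;L^2)$ control of the stress of the difference --- only the first-order energy bounds \rf{goes} and the lifting properties \rf{V} --- which makes the argument more self-contained; the price is a larger inventory of lifting-generated forcing terms, a Gronwall step (with $g^m=c\|\n v\|_2^4\in L^1$ uniformly, which is fine by \rf{goes}), and the extra final step converting $\sup_t\|\psi^m\|_2\to0$ into $\sup_t\|w^m\|_2\to0$. Your handling of that last step is the one place where care is genuinely required, and you identify it correctly: since $\phi^m(t)=(1+t)^{-1/m}-1$ is not uniformly small, you must lean on the decay $\|\widehat V(t)\|_2\le c(|\xi(t)|+|\omega(t)|)\to0$ (from $\xi,\omega\in W^{1,2}(0,\infty)$) for large $t$ and on $\sup_{t\le T}|\phi^m|\to0$ for the compact part; the paper avoids this issue entirely because its dominated-convergence step already produces a bound on $\|w^m(t)\|_2$ rather than on a lifted quantity. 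Both proofs are sound; yours trades the paper's reliance on second-order and pressure estimates for more bookkeeping in the energy identity.
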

\bp We set $w^m:=v^m-v$ and $p^m:=\pi^m-\pi$. Then the pair $(w^m,p^m)$ is a solution to the problem
\be\label{LDC-I}\ba{l}w_t^m-\n\cdot \mathbb T(w^m,p^m)=-(w^m-V)\cdot\n w^m -w^m\cdot \n v-v\cdot\n w^m+\omega\times w^m\,, \VS \n\cdot w^m\,,\mbox{ in }(0,T)\times\OO\,,\VS w^m=\big[1-(1+t)^{-\frac1m}\big]V(t, x)\mbox{ on }(0,T)\times\po\,,\;w^m=v^m_0-v_0\mbox{ on }\{0\}\times\OO\,.\ea\ee
Testing \rf{LDC-I}$_1$ by $w^m$ and integrating by parts on $(0,T)\times\OO$, we get 
\be\label{MiNa}\sfrac 12\sfrac d{dt}\dm w^m\dm_2^2+\dm \n w^m\dm_2^2=\big[1-(1+t)^{-\frac1m}\big]
\intl{\po}n\cdot \mathbb T(w^m,p^m)\cdot V+(w^m\cdot\n w^m,v)\,.
\ee
Applying   H\"older inequality  and   Sobolev inequality with non-homogeneous  boundary value  for $w^m$ \cite{GaMon}, we deduce 
$$\big|(w^m\cdot\n w^m,v)\big|\leq c\dm \n w^m\dm_2^2\dm v\dm_3\,.$$
Thanks to the interpolation inequality with non-homogeneous boundary value \cite{CrMa} 
$$
\|v\|_3\le c\,\|v\|_2^{\frac12}\|\nabla v\|_2^{\frac12}
$$
and \rf{goes}, it follows
\be\label{LDC-II}
\big|(w^m\cdot\n w^m,v)\big|\leq c\,\delta\,\|\nabla w^m\|_2^2.
\ee
Moreover, by standard trace theorems and Schwarz inequality  we get
$$H^m(t):=\Big|\intl{\po}n\cdot \mathbb T(w^m,p^m)\cdot V\Big|\leq c\big[\dm \n\cdot \mathbb T(w^m,\pi^m)\dm_2+\dm \mathbb T(w^m,p^m)\dm_2\big]\big[|\xi|+|\omega|\big]\,.$$
In view of Lemmas \ref{GPSMD} and \ref{ASS}, and of the statements made at the end of the proof of Lemma \ref{ExNSM}, we infer
$$\label{LDC-III}
\int_0^\infty \big[\dm \n\cdot \mathbb T(w^m,\pi^m)\dm_2+\dm \mathbb T(w^m,p^m)\dm_2\big]^2\le C
$$ 
where $C$ is independent of $m$.
Thus,
$$
\int_0^\infty[1-(1+t)^{\frac1m}]H^m(t)dt\le c\left(\int_0^\infty[1-(1+t)^{\frac1m}]^2[|\xi(t)|+|\omega(t)|]^2dt\right)^\frac12
$$
Consequently, using the latter along with \rf{LDC-II} in \rf{MiNa}, and taking $\delta$ sufficiently small, we deduce 
\be\label{LDC-IV}\dm w^m(t)\dm_2^2\leq \dm w^m(0)\dm_2^2+c\intll0\infty\big[1-(1+t)^{-\frac1m}\big]^2\big[|\xi(t)|+|\omega(t)|\big]^2dt\,,\ee
with $c$ independent of $m$.
Letting $m\to \infty$ in \rf{LDC-IV}, and observing that the sequence $\{1-(1+t)^{-\frac1m}\}$  converges to zero, the property \rf{goes} along with the dominated convergence theorem allows us to conclude the proof.\ep
 \bp[Proof of Theorem\,\ref{ABLTN}] By the previous lemma, for a given $\varepsilon>0$ we can find $\ov{m}\in\mathbb N$ depending only on $\varepsilon$ such that
$$
\|v^{\ov{m}}(t)-v(t)\|_2<\varepsilon\,,
$$
and therefore, by the triangle inequality,
$$
\|v(t)\|_2<\varepsilon +\|v^{\ov{m}}(t)\|_2\,.
$$
As a consequence, using
 the results of Lemma\,\ref{VMAB} we get 
$$\limsup_{t\to\infty}\dm v(t)\dm_2<\varepsilon\,,
$$ 
which, by the arbitrariness of $\varepsilon$, proves the theorem. 
\ep

\medskip\par\noindent
{\bf Acknowledgment.} The work of G.P.~Galdi is partially supported by NSF grant DMS-2307811. The work of P. Maremonti is supported by GNFM (INdAM).

\end{document}